\documentclass[12pt,reqno]{article}

\usepackage[usenames]{color}
\usepackage{amssymb}
\usepackage{amsmath}
\usepackage{amsthm}
\usepackage{amsfonts}
\usepackage{amscd}
\usepackage{graphicx}

\usepackage[colorlinks=true,
linkcolor=webgreen,
filecolor=webbrown,
citecolor=webgreen]{hyperref}

\definecolor{webgreen}{rgb}{0,.5,0}
\definecolor{webbrown}{rgb}{.6,0,0}

\usepackage{fullpage}
\usepackage{float}

\usepackage{graphics}
\usepackage{latexsym}
\usepackage{epsf}
\usepackage{breakurl}

\usepackage{upquote}
\usepackage{listings}

\setlength{\textwidth}{6.5in}
\setlength{\oddsidemargin}{.1in}
\setlength{\evensidemargin}{.1in}
\setlength{\topmargin}{-.1in}
\setlength{\textheight}{8.4in}

\newcommand{\seqnum}[1]{\href{https://oeis.org/#1}{\rm \underline{#1}}}

\bibliographystyle{jis}

\newcommand{\abs}[1]{\left\lvert#1\right\rvert}
\newcommand{\Nset}{\mathbb{N}}
\newcommand{\Zset}{\mathbb{Z}}
\newcommand{\Qset}{\mathcal{Q}}
\newcommand{\Sset}{\mathcal{S}}
\newcommand{\Dset}{\mathcal{D}}
\newcommand{\rb}[1]{_{\rm #1}}
\newcommand{\down}[1]{\bar{#1}}
\newcommand{\floor}[1]{\lfloor#1\rfloor}

\newcommand{\dsfrac}[2]{\frac{\displaystyle{#1}}{\displaystyle{#2}}} 
\newlength{\defaultmultlinegap}
\setlength{\defaultmultlinegap}{\multlinegap}
\newcommand{\zmlg}{\setlength{\multlinegap}{0pt}}
\newcommand{\rmlg}{\setlength{\multlinegap}{\defaultmultlinegap}}
\newcommand{\loi}{{l_{\mathrm{o}i}}}
\newcommand{\loei}{{l_{\mathrm{oe}i}}}
\newcommand{\tloi}{{\tilde{l}_{\mathrm{o}i}}}
\newcommand{\koi}{{k_{\mathrm{o}i}}}
\newcommand{\koei}{{k_{\mathrm{oe}i}}}
\newcommand{\tkoi}{{\tilde{k}_{\mathrm{o}i}}}
\renewcommand{\le}[1]{l_{\mathrm{e}#1}}
\renewcommand{\ker}{{k_{\mathrm{e}r}}}
\newcommand{\lci}{{l_{\mathrm{c}i}}}
\newcommand{\tlc}[1]{\tilde{l}_{\mathrm{c}#1}}
\newcommand{\kci}{{k_{\mathrm{c}i}}}
\newcommand{\tkci}{{\tilde{k}_{\mathrm{c}i}}}
\newcommand{\lce}[1]{l_{\mathrm{ce}#1}}
\newcommand{\kcei}{{k_{\mathrm{ce}i}}}
\newcommand{\negspc}{\hspace{0.4ex}}
\newcommand{\qp}{\negspc+\negspc}
\newcommand{\qm}{\negspc-\negspc}
\newcommand{\Gpath}[3]{\operatorname{\mathit{G}_{#1\to #2}^{\{#3\}}}}
\newcommand{\Gloop}[2]{\operatorname{\mathit{G}_{#1}^{\{#2\}}}}
\newcommand{\mat}[1]{\mathbf{#1}}
\begin{document}

\begin{center}
\vspace*{5mm}
\end{center}

\theoremstyle{plain}
\newtheorem{theorem}{Theorem}
\newtheorem{corollary}[theorem]{Corollary}
\newtheorem{lemma}[theorem]{Lemma}
\newtheorem{proposition}[theorem]{Proposition}

\theoremstyle{definition}
\newtheorem{definition}[theorem]{Definition}
\newtheorem{example}[theorem]{Example}

\theoremstyle{remark}
\newtheorem{remark}[theorem]{Remark}

\begin{center}
\vskip 1cm{\LARGE\bf
  Connections Between Combinations Without\\
  Specified Separations and Strongly Restricted\\
\vskip .08in 
  Permutations, Compositions, and Bit Strings
}
\vskip 1cm
\large
Michael A. Allen\\
Physics Department\\
Faculty of Science\\
Mahidol University\\
Rama 6 Road\\
Bangkok 10400\\
Thailand\\
\href{mailto:maa5652@gmail.com}{\tt maa5652@gmail.com} \\
\end{center}

\vskip .2 in

\begin{abstract}
Let $S_n$ and $S_{n,k}$ be, respectively, the number of subsets and
$k$-subsets of $\mathbb{N}_n=\{1,\ldots,n\}$ such that no two subset
elements differ by an element of the set $\mathcal{Q}$, the largest
element of which is $q$. We prove a bijection between such $k$-subsets
when $\mathcal{Q}=\{m,2m,\ldots,jm\}$ with $j,m>0$ and permutations
$\pi$ of $\mathbb{N}_{n+jm}$ with $k$ excedances satisfying
$\pi(i)-i\in\{-m,0,jm\}$ for all $i\in\mathbb{N}_{n+jm}$. We also
identify a bijection between another class of restricted permutation
and the cases $\mathcal{Q}=\{1,q\}$ and derive the generating function
for $S_n$ when $q=4,5,6$. We give some classes of
$\mathcal{Q}$ for which $S_n$ is also the number of compositions of
$n+q$ into a given set of allowed parts.
We also prove a bijection between $k$-subsets for a class of
$\mathcal{Q}$ and the set representations of size $k$ of equivalence
classes for the occurrence of a given length-($q+1$) subword within
bit strings. We then formulate a straightforward procedure for
obtaining the generating function for the number of such equivalence
classes.
\end{abstract}

\section{Introduction}
Combinations without specified separations (or \textit{restricted
  combinations}) refer to the subsets of $\Nset_n=\{1,\ldots,n\}$ with
the property that no two elements of a subset differ by an element of
a set $\Qset$, the elements of which are independent of $n$. We denote
the total number of such subsets and the number of $k$-subsets (i.e.,
subsets of size $k$) by $S^\Qset_n$ and $S^\Qset_{n,k}$,
respectively. We drop the superscript when it is clear which set
$\Qset$ of disallowed differences we are dealing with. For any
$\Qset$, one always takes
$S^\Qset_{n<0}=S^\Qset_{n,k<0}=S^\Qset_{n<k,k}=0$.

When there are no restrictions, we just have the number of ordinary
combinations and so $S^{\{\}}_n=2^n$ (\seqnum{A000079} in the OEIS
\cite{Slo-OEIS}) and $S^{\{\}}_{n,k}=\tbinom{n}{k}$
(\seqnum{A007318}).  It appears to be well-known that
$S^{\{1,\ldots,q\}}_n=S^{\{1,\ldots,q\}}_{n-1}+S^{\{1,\ldots,q\}}_{n-q-1}
+\delta_{n+q,0}$, where $\delta_{i,j}$ is 1 if $i=j$ and 0 otherwise,
and $S^{\{1,\ldots,q\}}_{n,k}=\tbinom{n+q(1-k)}{k}$ (\seqnum{A329146}). These reduce to
the classic results $S^{\{1\}}_n=F_{n+2}$, where $F_n$ is the $n$-th
Fibonacci number (\seqnum{A000045},
$F_n=F_{n-1}+F_{n-2}+\delta_{n,1}$, $F_{n<1}=0$), and
$S^{\{1\}}_{n,k}=\tbinom{n+1-k}{k}$ (\seqnum{A011973}) \cite{Kap43}.
Expressions for $S_n^\Qset$ and $S^\Qset_{n,k}$ have also been
obtained when $\Qset=\{q\}$ \cite{Kon81,Pro83,KL91c,KL91b,AE22} and
$\Qset=\{m,2m,\ldots,jm\}$ with $j,m>0$ \cite{MS08,All22}.

In recent work \cite{All-Comb}, which we henceforth refer to as
All24, the author found a bijection between the $k$-subsets of $\Nset_n$ with
disallowed differences set $\Qset$, of which the largest element is $q$,
and the restricted-overlap tilings of an $(n+q)$-board (an
$(n+q)\times1$ board) with squares and $k$ combs. This
enables generating functions for $S^\Qset_n$ and recursion relations
for $S^\Qset_{n,k}$ to be readily obtained for a variety of families
of $\Qset$ \cite{All-Comb}.  A $(w_1,g_1,w_2,\ldots,g_{t-1},w_t)$-comb
is a tile composed of $t>0$ rectangular sub-tiles (called
\textit{teeth}) arranged in such a way that the $i$-th tooth, which
has dimensions $w_i\times1$, is separated from the $(i+1)$-th by a
$g_i\times1$ gap \cite{All-Comb}.  The \textit{length} of a comb is
the sum of the lengths of the teeth and gaps. Each $\Qset$ corresponds
to a length-$(q+1)$ comb $C$ (which we refer to as a
\textit{$\Qset$-comb}) defined as follows. We label the cells of $C$
from 0 to $q$. Cell 0 is always part of the first tooth. Cell $i$
(where $i=1,\ldots,q$) is part of a tooth iff $i\in\Qset$.  Note that
we are free to include 0 in $\Qset$ without affecting the results (and
we do so in Theorems \ref{T:per} and \ref{T:per2} for reasons of
convenience and symmetry) since subset elements are distinct and so
cannot differ by 0. This 0 element can then be regarded as
corresponding to cell 0 of the comb.  To ensure that there is only one
way to specify the $w_i$ and $g_i$ of the $\Qset$-comb, we insist that
all teeth and gaps have positive integer length. A
\textit{restricted-overlap tiling} using squares and $\Qset$-combs
means that any comb can be placed so that its non-leftmost cells (i.e,
cells 1 to $q$) overlap the non-leftmost cells of any other combs on
the board and no other forms of overlap of tiles is allowed
\cite{All-Comb} (see Fig.~\ref{f:twocombmetatiles} for examples).  We
let $B^\Qset_n$ denote the number of restricted-overlap tilings of an
$n$-board with $\Qset$-combs and squares, and $B^\Qset_{n,k}$ denote
the number of such tilings that contain $k$ combs.  The following
theorem is a consequence of the bijection.
\begin{theorem}[Theorem~2 in All24]\label{T:S=B}
For all nonnegative $n$ we have
$S^\Qset_n=B^\Qset_{n+q}$
and $S^\Qset_{n,k}=B^\Qset_{n+q,k}$.
\end{theorem}

Any tiling of an $n$-board can be expressed as a tiling using
\textit{metatiles}, which are gapless groupings of tiles that cannot
be split into smaller gapless groupings by removing a gapless grouping
of tiles \cite{Edw08}. The simplest metatiles when tiling a long
enough board with squares and combs are a single square (denoted by
$S$) and a $(w_1,g_1,\ldots,w_t)$-comb with all the gaps filled by
squares. We call the latter metatile a \textit{filled comb} and it is
denoted by $CS^g$, where $g=\sum_ig_i$. If the comb has one tooth it
is just a $w_1$-omino (and also a metatile by itself), and $S$ and $C$
are then the only metatiles. Otherwise, as a consequence of the
restricted-overlap nature of the tiling, a $C$ can always be placed so
that its leftmost cell occupies the next available empty cell of the
yet-to-be-completed metatile. If more empty cells within the
yet-to-be-completed metatile result (which, if a $C$ is placed
starting in the final cell of the final gap of the first comb on the
board, is the case iff $2w_t<q$ \cite[Lemma 2]{All-Comb}), this
can continue indefinitely and so there are an infinite number of
possible metatiles.

A systematic way to generate all metatiles, and thus obtain generating
functions for $B^\Qset_n$ and $B^\Qset_{n,k}$, is to first construct a
directed pseudograph (henceforth simply referred to as a
\textit{digraph}) in which each node has a label giving a bit-string
representation of the remaining gaps and filled cells (a 0
representing an empty cell, a 1 a filled one) starting at the next gap
in the yet-to-be-completed metatile and ending at the final filled
cell \cite{EA15,All-Comb} (see Figs.~\ref{f:dg14}, \ref{f:dg15}, and
\ref{f:dg16} for examples). The \textit{0~node} represents both the
empty board and the completed metatile. Each arc represents the
addition of a tile and has a label showing the type of tile and a
subscript giving the increase in length of the incomplete metatile
resulting from adding that tile, if that increase is not 0. The length
increase from adding a $C$ is given by $q+1-d$, where $d$ is the
number of binary digits in the node the arc starts from, but with $d=0$
for the 0~node.  A walk starting and ending at the 0~node, but not
visiting it in between, represents a metatile and is denoted by the
string of $C$ and $S$ corresponding to the arcs in the order
visited. For example, in Fig.~\ref{f:dg15}, the loop attached to the
0~node corresponds to the $S$ metatile, and the cycle starting at the
0~node and then visiting the $0^31$ node (an abbreviation for $0001$),
the $0^21$ node, and then the 01 node before returning to the 0~node
corresponds to the $CS^2$ (filled comb) metatile.

A cycle that does not include the 0~node is called an \textit{inner
  cycle} (e.g., $SSC_{[4]}$ in Fig.~\ref{f:dg15}, where including the
length increment subscript quickly identifies which comb arc we are
referring to). There are a finite number of possible metatiles iff the
digraph lacks an inner cycle since a walk can traverse an inner cycle
an arbitrary number of times before returning to the 0~node.  If all
the inner cycles have at least one node in common, that node is called
a \textit{common node} (e.g., the $01^{q-p}$ node is the common node
in the Fig.~\ref{f:dgwba=1} digraph; either the $0^21$ or the 01~node
can be chosen as the common node in the digraph in Fig.~\ref{f:dg14};
Fig.~\ref{f:dg15} and Fig.~\ref{f:dg16} show digraphs that have inner
cycles but no common node). For digraphs with a common node, there are
simple expressions for the recursion relations for the numbers of
tilings $B^\Qset_n$ and $B^\Qset_{n,k}$ in terms of lengths of various
cycles in the digraph \cite{EA15,All-Comb} (we rederive these in the
form of generating functions in \S\ref{s:pcn}).  If the digraph
possesses inner cycles, but no common node, there is a class of digraph
(which has 3 inner cycles) where an analogous simple procedure has
been found for obtaining the recursion relations \cite{All22}.

A systematic search of the OEIS for sequences $(S^\Qset_n)_{n\ge0}$
for all $\Qset$ with $q\leq4$ shows that there are various connections
between some types of restricted combinations and some types of
restricted permutations. One also finds relations to compositions and
bit strings.  Here, in \S\ref{s:srp}, we give bijections between
between two classes of strongly restricted permutations and
combinations with $\Qset=\{1,q\}$ and $\Qset=\{m,2m,\ldots,jm\}$. As a
consequence of the former bijection, we can prove a conjectured
recursion relation for the permutations corresponding to the
$\Qset=\{1,4\}$ case. In \S\ref{s:pcn}, we derive generating functions
for $B^\Qset_n$ and $B^\Qset_{n,k}$ for tilings with metatiles
generated by a particular class of digraph (which includes the 3 inner
cycle class with no common node dealt with previously \cite{All22})
without a common node. This enables us to easily obtain a generating
function for $S^{\{1,5\}}_n$.  In \S\ref{s:comp}, although it is not
all that surprising given the tiling interpretation of restricted
combinations, for completeness, we also show that $S^\Qset_n$ is the
number of compositions of $n+q$ into certain sets of parts for some
classes of $\Qset$.  We point out an elementary connection between
all instances of $\Qset$ and bit strings in \S\ref{s:bits}. This
forms the basis for an algorithm for efficiently calculating
$S^\Qset_n$ and $S^\Qset_{n,k}$ for any given $\Qset$, as detailed via
a heavily annotated C program listed in the Appendix. In
\S\ref{s:bits}, we also give a bijection between $k$-subsets for
certain $\Qset$ and equivalence classes for the occurrence of certain
subwords within bit strings. The bijection enables us to show that a
generating function for the number of equivalence classes is always
straightforward to obtain. In the discussion
(\S\ref{s:dis}), we mention transfer matrices as an
alternative way to obtain the $B^\Qset_n$ and $B^\Qset_{n,k}$
generating functions from digraphs. 

\section{Generating functions for the numbers of walks on
  digraphs}\label{s:pcn}

We start by reviewing further terminology concerning digraphs.  For a
digraph possessing a common node, a path from the 0~node to the common
node followed by a path from the common node back to the 0~node (and
so the common node is only visited once) is called a \textit{common
  circuit} (e.g., for either choice of common node, the digraph in
Fig.~\ref{f:dg14} possesses a single common circuit, namely,
$C_{[5]}S^2$). A cycle that includes the 0~node, but not the common
node, is called an \textit{outer cycle} (e.g., in Fig.~\ref{f:dg14}, the
$S$ loop attached to the 0~node is the only outer cycle).

The proofs in this section require the following three lemmas \cite{Mat14_}.
The bivariate generating functions $G_i(x,y)$ we employ here are such
that the coefficient of $x^ly^k$ of the generating function of a walk
or set of walks between two nodes is the number of configurations of tiles that
contribute $l$ to the overall length of the tiling and contain $k$ combs. 

\begin{figure}[b]
\begin{center}
\includegraphics[width=16cm]{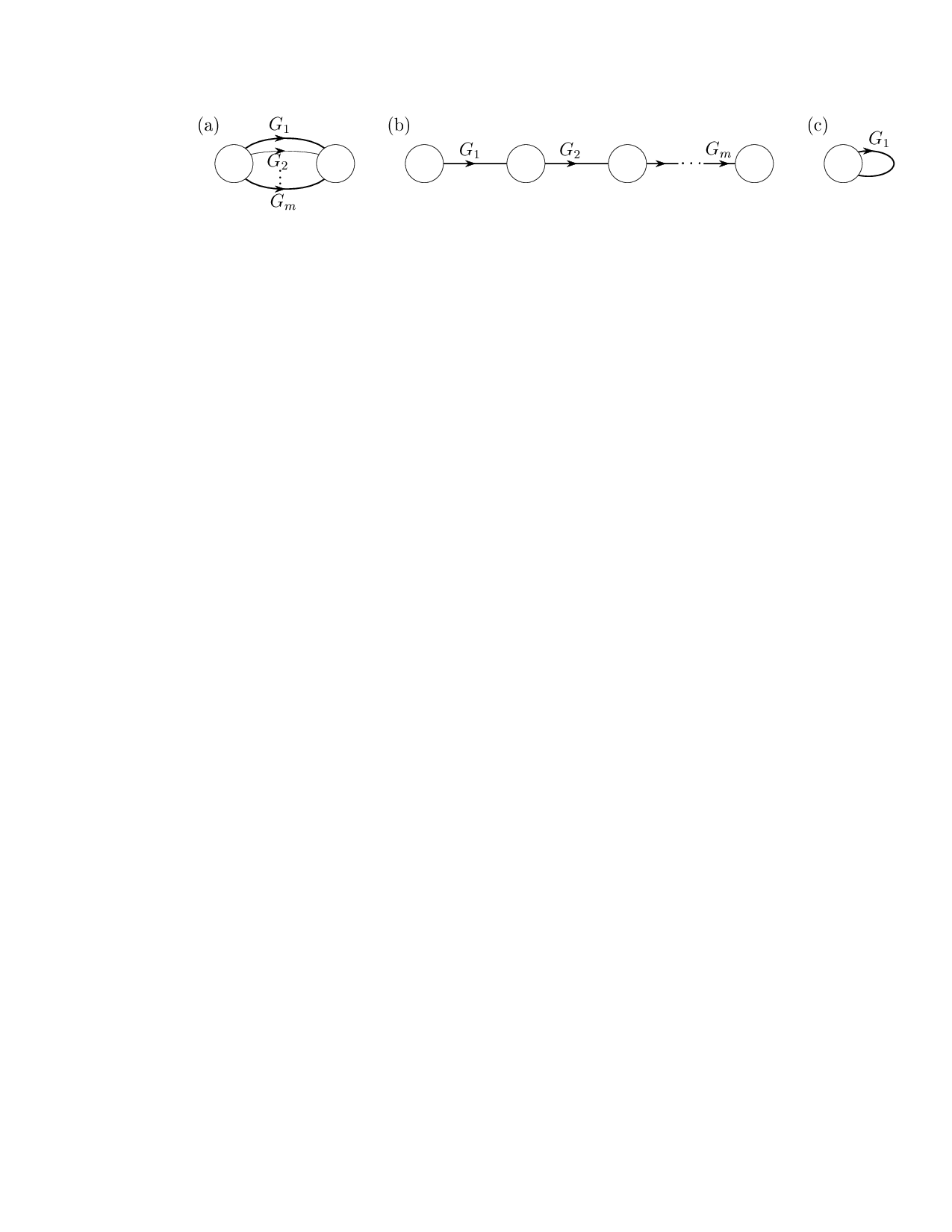}
\end{center}
\caption{Schematic diagrams of connections between nodes corresponding
  to the construction of generating functions via (a)~the parallel
  rule (Lemma~\ref{L:par}), (b)~the series rule (Lemma~\ref{L:ser}),
  and (c)~the loop rule (Lemma~\ref{L:loop}).}
\label{f:gflem}
\end{figure}

\begin{lemma}[The Parallel Rule]\label{L:par}
If two nodes in a digraph are connected via $m$ alternative walks (or
sets of walks), as shown in Fig.~\ref{f:gflem}a, that have associated
generating functions $G_i(x,y)$, where $i=1,\ldots,m$, then the
combined generating function for all of those walks connecting the two
nodes is $\sum_{i=1}^m G_i(x,y)$.
\end{lemma}

\begin{lemma}[The Series Rule]\label{L:ser} 
If two nodes in a digraph are connected via $m-1$ intermediate nodes,
which are connected by walks (or sets of walks), as shown in
Fig.~\ref{f:gflem}b, that have associated generating functions
$G_i(x,y)$, where $i=1,\ldots,m$, then the combined generating
function for all of those walks connecting the two nodes is
$\prod_{i=1}^m G_i(x,y)$.
\end{lemma}

\begin{lemma}[The Loop Rule]\label{L:loop}
If a node is connected to itself by a walk (or set of walks), 
 as shown in Fig.~\ref{f:gflem}c, with a corresponding generating function
$G_1(x,y)$, then the overall generating function corresponding to the
traversal of the loop walk (or walks) zero or more times is
\[
1+G_1(x,y)+G_1^2(x,y)+\cdots=\frac{1}{1-G_1(x,y)}.
\]
\end{lemma}

A form of the following theorem was originally stated by Edwards and Allen
\cite{EA15}, and a more compact version re-derived later
\cite{All-Comb}. Here, using the above three lemmas, we obtain 
the result in terms of a bivariate generating function.
We use this theorem in \S\ref{s:srp1} to obtain
the $S^{\{1,4\}}_n$ generating function.

\begin{theorem}\label{T:cn}
For a digraph possessing a common node, let $\loi$ be the
length of the $i$-th outer cycle ($i=1,\ldots,N\rb{o}$) and
let $\koi$ be the number of combs it contains, let $L_r$ be
the length of the $r$-th inner cycle ($r=1,\ldots,N$) and let $K_r$ be the
number of combs it contains, and let $\lci$ be the length
of the $i$-th common circuit ($i=1,\ldots,N\rb{c}$) and
let $\kci$ be the number of combs it contains. Then the
generating function such that the coefficient of $x^ny^k$ thereof is the
number of tilings an $n$-board that contain $k$ combs is given by
\begin{equation}\label{e:cngf}
G(x,y)=\dsfrac{1-\sum_{r=1}^Nx^{L_r}y^{K_r}}
{1-\sum_{r=1}^Nx^{L_r}y^{K_r}
-\sum_{i=1}^{N\rb{o}}
\biggl(x^\loi y^\koi
-\sum_{r=1}^Nx^{\loi+L_r}y^{\koi+K_r}\biggr)
  -\sum_{i=1}^{N\rb{c}} x^\lci y^\kci}.
\end{equation}
\end{theorem}

\begin{figure}
\begin{center}
\includegraphics[width=16cm]{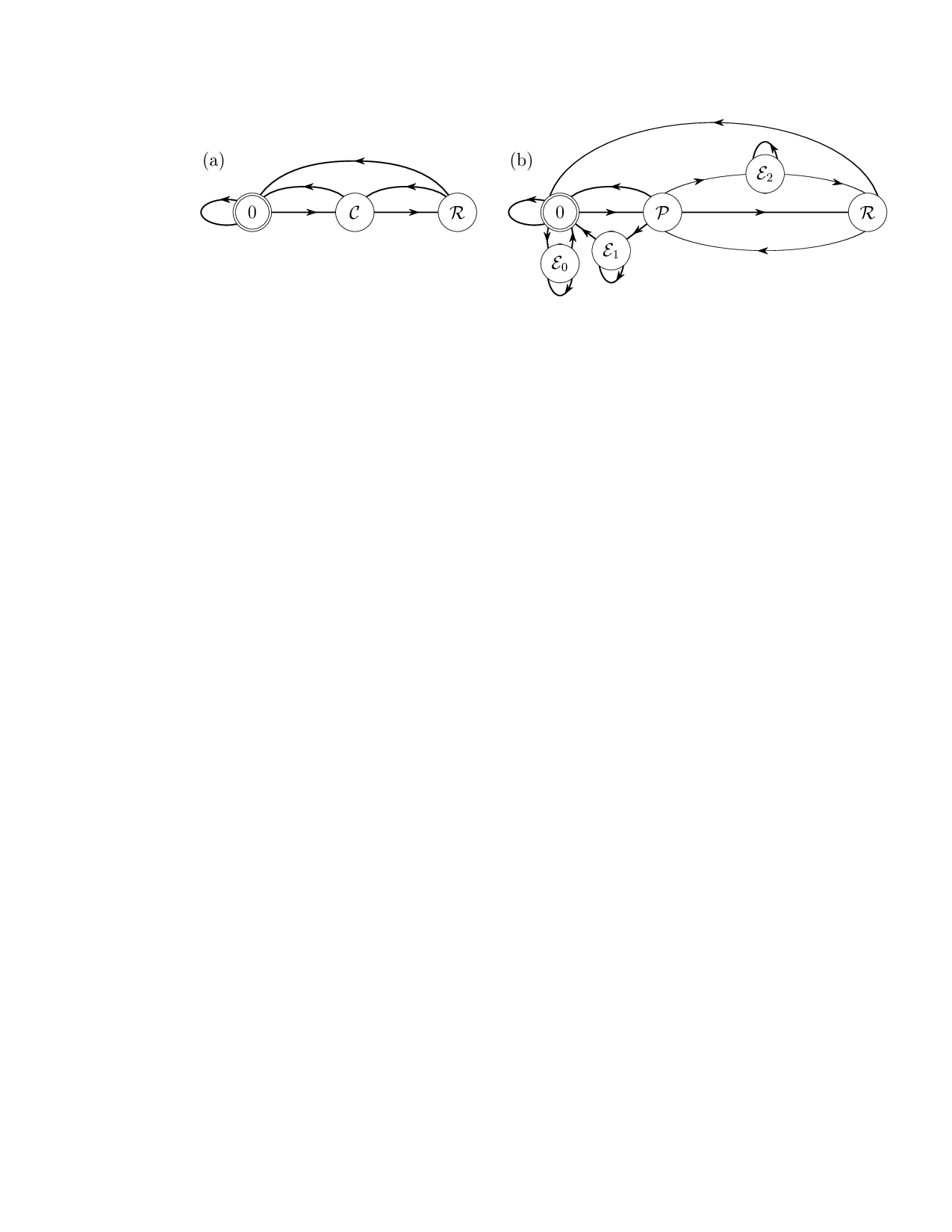}
\end{center}
\caption{Schematic diagrams of digraphs with (a)~a common node $\mathcal{C}$
and (b)~a pseudo-common node $\mathcal{P}$ to which Theorem~\ref{T:cn} and
Theorem~\ref{T:pcn} apply, respectively. In these diagrams, each arc,
with the exception of each errant loop (each arc connecting an
$\mathcal{E}_i$ node to itself), potentially represents a number of
alternative arcs in the specific instance of a digraph. Some nodes and
arcs can
be absent.}
\label{f:cnpcn}
\end{figure}

\begin{proof}
The generating functions corresponding to a single traversal of the
$i$-th outer cycle (represented by the loop attached to the 0 node in
Fig.~\ref{f:cnpcn}a), the $i$-th common circuit (represented by the
path from the 0 node to the $\mathcal{C}$ node and back or the path
$0\to\mathcal{C}\to\mathcal{R}\to0$), and the $r$-th inner cycle
($\mathcal{C}\to\mathcal{R}\to\mathcal{C}$) are, respectively, $x^\loi
y^\koi$, $x^\lci y^\kci$, and $x^{L_r}y^{K_r}$.  The $N$ inner cycles
are alternative paths from the common node back to itself and so, from
the parallel and loop rules, the generating function corresponding to
their traversal (starting from and returning to the common node) in
any order zero or more times is
$1/(1-\sum_{i=1}^Nx^{L_r}y^{K_r})$. Using the series rule, we multiply
this by $x^\lci y^\kci$ to obtain the generating function for the walk
that consists of the paths to and from $\mathcal{C}$
along the $i$-th common circuit and an arbitrary number of
trips around the inner cycles. We now apply the parallel rule to all
such cases and to all outer cycles, followed by the loop rule since
these can be executed any number of times. This gives
\[
G(x,y)=\dsfrac{1}{1-\sum_{i=1}^{N\rb{o}}x^\loi y^\koi
  -\dsfrac{\sum_{i=1}^{N\rb{c}} x^\lci y^\kci}
  {1-\sum_{r=1}^Nx^{L_r}y^{K_r}}},
\]
which yields \eqref{e:cngf} on rearranging.
\end{proof}
  
The following definitions are slightly modified versions of those
given in earlier work on a class of digraph with inner cycles but no
common node in order to incorporate the increased complexity of the
type of digraphs we are about to consider \cite{All22}.  For a digraph
lacking a common node, we refer to any inner cycle that can be
represented as a single arc linking a node to itself as an
\textit{errant loop} if the digraph would only have a common node
$\mathcal{P}$ if all the errant loop arcs were removed. The node
$\mathcal{P}$ is then referred to as a \textit{pseudo-common node}. A
node with an errant loop is called an \textit{errant loop node}.
Evidently, a particular errant loop node $\mathcal{E}$ cannot also be
a pseudo-common node; if that were the case, removing all the other
errant loop arcs except the errant loop at $\mathcal{E}$ would result
in a digraph with a common node, implying that the loop at
$\mathcal{E}$ was never an errant loop in the first place.  For a
digraph with at least one errant loop, a \textit{common circuit} is
now defined as two concatenated simple paths from the 0~node to
$\mathcal{P}$ and from $\mathcal{P}$ to the 0~node. We also need to
modify the definition of an outer cycle: it is now a cycle that
includes the 0~node but not $\mathcal{P}$. Inner cycles (other than
errant loops), outer cycles, and common circuits are said to be
\textit{plain} if they do not contain any errant loop nodes and
\textit{non-plain} otherwise.

\begin{theorem}\label{T:pcn}
If a digraph has plain outer cycles of lengths $\loi$ that contain $\koi$
combs ($i=1\ldots,N\rb{o}$), non-plain outer cycles of lengths $\tloi$
that contain $\tkoi$ combs with associated errant loops having lengths
$\loei$ and $\koei$ combs ($i=1\ldots,\tilde{N}\rb{o}$), 
plain inner cycles (excluding the errant loops) of lengths $L_r$ that
contain $K_r$ combs ($r=1,\ldots,N$),
non-plain inner cycles of lengths $\tilde{L}_r$
that contain $\tilde{K}_r$ combs with associated errant loops having
lengths $\le{r}$ and $\ker$ combs ($r=1,\ldots,\tilde{N}$), 
plain common circuits of lengths
$\lci$ that contain $\kci$ combs ($i=1,\ldots,N\rb{c}$),
non-plain common circuits of lengths
$\tlc{i}$ that contain $\tkci$ combs  with associated errant loops having lengths
$\lce{i}$ and $\kcei$ combs ($i=1,\ldots,\tilde{N}\rb{c}$),
and all non-plain cycles and circuits contain exactly one errant node,
then
the generating function whose coefficient of $x^ny^k$ gives the number
of tilings of an $n$-board that use $k$ combs is given by
\begin{equation}\label{e:pcngf}
G(x,y)=\dsfrac{1}{1-\sum_{i=1}^{N\rb{o}}x^\loi y^\koi
  -\sum_{i=1}^{\tilde{N}\rb{o}}\dsfrac{x^\tloi y^\tkoi}{1-x^\loei y^\koei}
  -\dsfrac{\sum_{i=1}^{N\rb{c}} x^\lci y^\kci
    +\sum_{i=1}^{\tilde{N}\rb{c}}
    \dsfrac{x^{\tlc{i}}y^\tkci}{1-x^\lce{i}y^\kcei}}
  {1-\sum_{r=1}^{N}x^{L_r}y^{K_r}
    -\sum_{r=1}^{\tilde{N}}\dsfrac{x^{\tilde{L}_r}y^{\tilde{K}_r}}
  {1-x^{\le{r}}y^\ker}
}}.
\end{equation}
\end{theorem}

\begin{proof}
The proof is similar to that of Theorem~\ref{T:cn}; we simply have
more possible types of walk.  
The generating functions corresponding to a single traversal of the
$i$-th plain outer cycle, the $i$-th plain common circuit, and the
$r$-th plain inner cycle are, respectively, $x^\loi y^\koi$, $x^\lci
y^\kci$, and $x^{L_r}y^{K_r}$. On application of the series and loop
rules, one sees that the generating functions corresponding to a
single traversal of the $i$-th non-plain outer cycle, the $i$-th
non-plain common circuit, and the $r$-th non-plain inner cycle in
addition to an arbitrary number of traversals of their errant loops
are, respectively, $x^\tloi y^\tkoi/(1-x^\loei y^\koei)$,
$x^{\tlc{i}}y^\tkci/(1-x^{\lce{i}}y^\kcei)$, and
$x^{L_r}y^{K_r}/(1-x^{\le{r}}y^\ker)$. In Fig.~\ref{f:cnpcn}b these
are represented, respectively, by the walks $0\to\mathcal{E}_0\to0$,
$0\to\mathcal{P}\to\mathcal{E}_1\to0$ or
$0\to\mathcal{P}\to\mathcal{E}_2\to\mathcal{R}\to0$, and   
$\mathcal{P}\to\mathcal{E}_2\to\mathcal{R}\to\mathcal{P}$, where,
on reaching $\mathcal{E}_i$, its loop can be executed an arbitrary
number of times. It should now be apparent where the terms in the
denominator of \eqref{e:pcngf} arise from. The first two sums are from
the plain and non-plain outer cycles. The numerator of the final term is the
generating function for all the common circuits (including arbitrary
number of traversals of the errant loops in the case of the non-plain
ones). On reaching $\mathcal{P}$ via the first part of a common
circuit, the inner cycles (and their associated errant loops in the
case of the non-plain ones) can be executed in any order any number of
times, which is what the denominator of the final term corresponds to
(by application of all three rules). 
\end{proof}

To obtain the generating functions for $S^\Qset_{n}$ and
$S^\Qset_{n,k}$, which we denote by $g^{\Qset}(x)$ and
$g^{\Qset}(x,y)$, respectively, we require the following result.

\begin{corollary}\label{C:snkgf}
If $G(x,y)$ is the bivariate generating function for $B^\Qset_{n,k}$
then
\[
g^{\Qset}(x,y)=
\frac1{x^q}\biggl(G(x,y)-\frac{1-x^q}{1-x}\biggr),
\]
where $q$ is the largest element of $\Qset$,
and $g^\Qset(x)=g^\Qset(x,1)$.
\end{corollary}

\begin{proof}
From Theorem~\ref{T:S=B} we have $S^\Qset_{n,k}=B^\Qset_{n+q,k}$,
which implies that we can obtain $g^\Qset(x,y)$ by removing the first
$q$ terms from $G(x,y)$, namely, $1+x+x^2+\cdots+x^{q-1}$ (since the
only way to tile a board of length less than $q$ is with no
$\Qset$-combs), and then by dividing the result by $x^q$ to obtain the
correct offset.      
\end{proof}

\section{Bijections between
  strongly restricted permutations and restricted combinations}
\label{s:srp}

A strongly restricted permutation $\pi$ of the set $\Nset_n$ is a
permutation for which the number of permissible values of $\pi(i)-i$
for each $i\in\Nset_n$ is less than a finite number independent
of $n$~\cite{Leh70}. Here we give two bijections between restricted
combinations and types of
strongly restricted permutations where the permutations satisfy
$\pi(i)-i\in\Dset$.

\subsection{Connection with the class $\Qset=\{1,q\}$}
\label{s:srp1}

Our first bijection concerns a strongly restricted permutation where
$\Dset=\{-1,0,1\}$ and no two of $q$ consecutive $\pi(i)$ may
differ from one another by more than $q$.

\begin{theorem}\label{T:011m}
There is a bijection between the $k$-subsets of $\Nset_n$ such that no
two elements of the subset differ by an element of $\Qset=\{1,q\}$,
where $q=2,3,\ldots$, and the permutations $\pi$ of $\Nset_{n+1}$ that
have $k$ pairs of consecutive items that have been exchanged such 
that for all $i=1,\ldots,n+2-q$ the condition
$\max_{j=0,\ldots,q-1}\pi(i+j)-\min_{j=0,\ldots,q-1}\pi(i+j)\leq q$ is met.
\end{theorem}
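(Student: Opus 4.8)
The plan is to realize the permutations geometrically as domino-and-square tilings and then match these directly with the subsets. First I would record that, since the exchanged pairs give $\pi(i)-i\in\Dset=\{-1,0,1\}$, every exchanged pair must consist of \emph{adjacent} entries: if $\pi(a)=b$ with $b\neq a$ and $b-a\in\{-1,0,1\}$ then $b=a\pm1$. Hence $\pi$ is an involution that is a product of $k$ disjoint adjacent transpositions, and such a $\pi$ corresponds bijectively to a tiling of the length-$(n+1)$ board $\Nset_{n+1}$ by $k$ dominoes (one for each transposition $(a,a+1)$, covering cells $a$ and $a+1$) and $n+1-2k$ squares (the fixed points). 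I would encode each domino by its \emph{left endpoint} $a$, so that a tiling is just a set of $k$ left endpoints in $\{1,\dots,n\}$, no two of which are equal or consecutive, the latter being exactly the disjointness requirement.

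Next I would translate condition~(b) into a condition on the left endpoints. For a product of adjacent transpositions one has $\pi(i+j)\in\{i+j-1,\,i+j,\,i+j+1\}$, so the values occurring in the window of positions $i,\dots,i+m-1$ all lie in $[i-1,\,i+m]$. The value $i+m$ is attained only when a domino straddles the right edge of the window, i.e.\ has left endpoint $i+m-1$, and the value $i-1$ is attained only when a domino straddles the left edge, i.e.\ has left endpoint $i-1$. Consequently the range of the window exceeds $m$ (in fact equals $m+1$) precisely when the tiling contains dominoes with left endpoints both $i-1$ and $i+m-1$. Letting $i$ run over $1,\dots,n+2-m$ and checking that the boundary windows near cells $1$ and $n+1$ can never contain both straddling dominoes, I would conclude that condition~(b) holds for all windows if and only if no two dominoes have left endpoints differing by exactly $m$.

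Finally I would define the map: send a $k$-subset $A=\{a_1<\cdots<a_k\}$ of $\Nset_n$ avoiding the differences in $\Qset=\{1,m\}$ to the permutation of $\Nset_{n+1}$ whose exchanged pairs are $(a_1,a_1+1),\dots,(a_k,a_k+1)$. The avoidance of difference $1$ is exactly the statement that no two elements of $A$ are consecutive, which is the disjointness of the corresponding dominoes, and they fit in $\Nset_{n+1}$ since $a_k+1\le n+1$; the avoidance of difference $m$ is, by the previous paragraph, exactly condition~(b). The inverse simply reads off the set of left endpoints of the exchanged pairs, and the number of pairs equals $\abs{A}=k$, so the map is a bijection preserving the statistic $k$.

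The step I expect to require the most care is the translation of condition~(b) in the second paragraph: one must verify that the maximum and minimum of a window are controlled \emph{only} by dominoes straddling its two edges, and handle the windows near the ends of the board, where at most one straddling domino can exist, so that the admissible range $i=1,\dots,n+2-m$ matches exactly the pairs of left endpoints at distance $m$ with $1\le a$ and $a+m\le n$. Everything else is bookkeeping.
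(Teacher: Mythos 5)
Your proposal is correct and takes essentially the same route as the paper: the map sending each subset element $a$ to the adjacent transposition $(a,a+1)$ is exactly the paper's bijection, and your window analysis showing that condition (b) fails precisely when two dominoes have left endpoints differing by $m$ is the paper's observation that a violation forces $\pi(i+1)=i$ and $\pi(i+m)=i+m+1$, i.e., two subset elements at distance $m$. The domino-tiling framing and the explicit boundary-window checks are just a more detailed packaging of the same argument.
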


\begin{proof}
The permutation $\pi$ of $\Nset_{n+1}$ corresponding to an allowed
subset $\Sset$ of $\Nset_n$ where $\Qset=\{1,q\}$ is formed as
follows. If $i\in\Sset$ (where $i=1,\ldots,n$) then $\pi(i)=i+1$ and
$\pi(i+1)=i$ (i.e., a pair of two consecutive items has been
exchanged) whereas if neither $i$ nor $i-1$ are in $\Sset$ (for
$i=2,\ldots,n+1$) then $\pi(i)=i$. As $\Qset$ contains 1, once a pair
of items has been exchanged, neither item will be moved again.  Given
that $\pi(i)$ can be no more than 1 away from $i$, the only way that
the condition could be violated would be
if $\pi(i+1)=i$ (and hence $\pi(i)=i+1$) and $\pi(i+q)=i+q+1$; this
could only occur if both $i$ and $i+q$ were in $\Sset$. This is
impossible since $q\in\Qset$. The process of forming a permutation
from a subset is clearly reversible and so the bijection is
established.
\end{proof}

For example, the allowed subsets of $\Nset_5$ when $\Qset=\{1,3\}$
followed by the corresponding permutation of $\Nset_6$ are
\{\} 123456, \{1\} 213456, \{2\} 132456, \{3\} 124356, \{4\} 123546,
\{5\} 123465, \{1,3\} 214356, \{2,4\} 132546, \{3,5\} 124365, \{1,5\}
213465, and \{1,3,5\} 214365.

Theorem~\ref{T:011m} has connections to the following sequences in the
OEIS \cite{Slo-OEIS}: \seqnum{A000930} ($S^{\{1,2\}}_n$) and
\seqnum{A102547} ($S^{\{1,2\}}_{n,k}$), its associated triangle;
\seqnum{A130137} ($S^{\{1,3\}}_n$), which we mention again in the
section on bit strings; \seqnum{A263710} ($S^{\{1,4\}}_n$), which was
the sequence that led to the discovery of the bijection and we now
prove the conjectured generating function for that entry; along with
\seqnum{A374737} ($S^{\{1,5\}}_n$) and \seqnum{A385870}
($S^{\{1,6\}}_n$), which the author recently added to the OEIS.

The case $\Qset=\{1,4\}$ falls into the class covered by Theorem~8 in
All24, which gives recursion relations for
$B^{\Qset}_n$ and $B^{\Qset}_{n,k}$.  However, in order to avoid
introducing the notation used in that theorem at this stage, we
instead obtain the generating function using Theorem~\ref{T:cn}, which
is also straightforward and illustrative to do.

\begin{figure}
\begin{center}
\includegraphics[width=7cm]{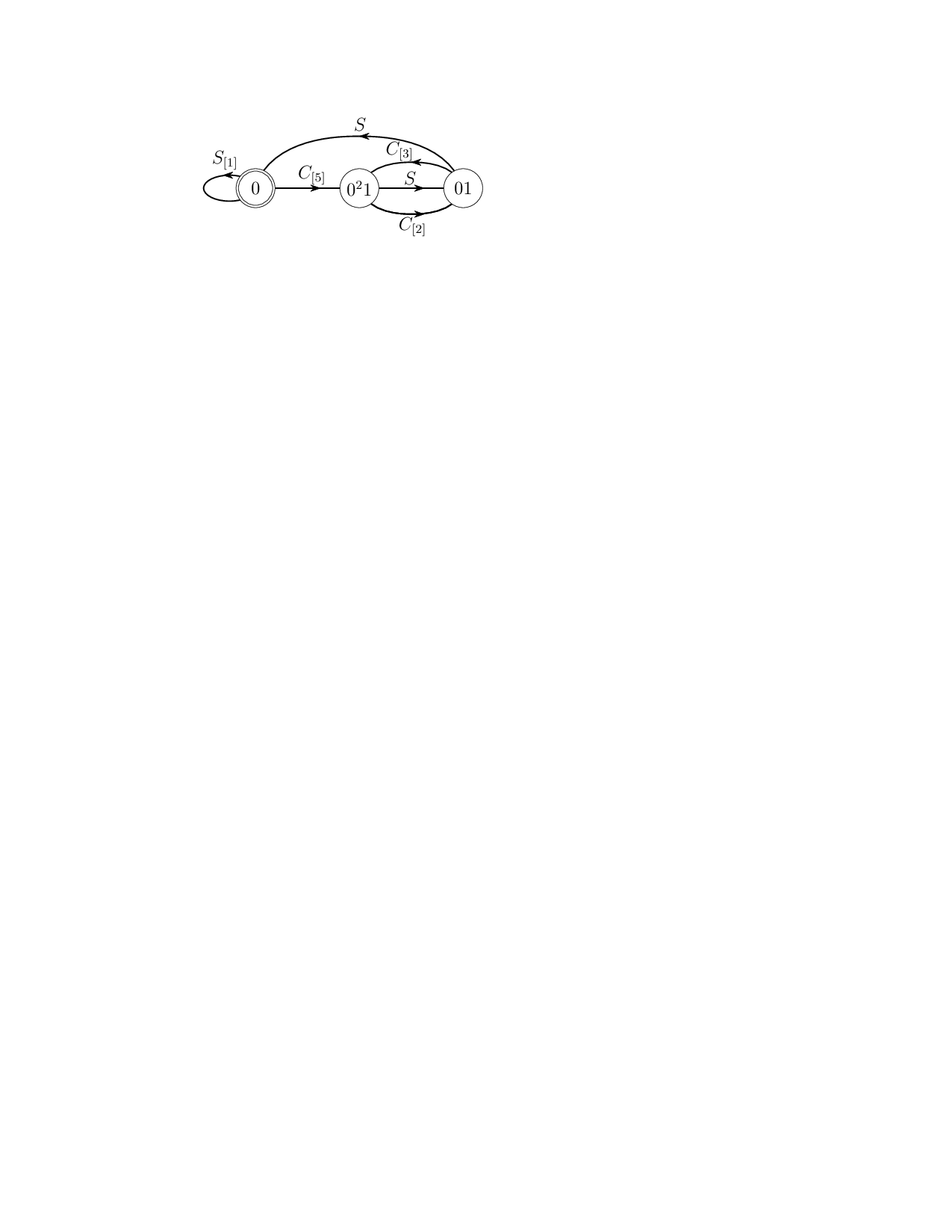}
\end{center}
\caption{Digraph for restricted-overlap tiling with squares ($S$) and
  $(2,2,1)$-combs ($C$), which corresponds to the $\Qset=\{1,4\}$
  case. In this and subsequent figures showing digraphs, each number
  in square brackets gives the contribution to the length of the
  metatile resulting from the addition of the tile. The subscript is
  omitted if there is no contribution, as is the case for any square
  placed in a gap of a comb.}
\label{f:dg14}
\end{figure}

\begin{proposition}
The generating function $g^{\{1,4\}}(x)$ for the number of restricted
combinations of $\Nset_n$ with disallowed differences $\{1,4\}$ is
given by
\begin{equation}\label{e:g14}
g^{\{1,4\}}(x)=\frac{1+x+x^2+x^3+2x^4+x^6}{1-x-x^3+x^4-2x^5+x^6-x^7}.
\end{equation}
\end{proposition}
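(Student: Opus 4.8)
The plan is to reduce the statement to a count of closed walks (tilings) on the digraph of Fig.~\ref{f:dg14} and then apply Theorem~\ref{T:CN}. By Theorem~\ref{T:S=B} with $q=4$ we have $S^{\{1,4\}}_n=B^{\{1,4\}}_{n+4}$, so it suffices to determine $B(x)=\sum_{n\ge0}B^{\{1,4\}}_nx^n$ and then shift. Since no comb (which has length $5$) fits on a board shorter than $5$, the only tiling of an $n$-board for $0\le n\le4$ is the all-squares tiling, giving $B_0=\cdots=B_3=1$, so that $G(x)=x^{-4}\bigl(B(x)-1-x-x^2-x^3\bigr)$.

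First I would read off the cycle data of Fig.~\ref{f:dg14}, whose combs are $(2,2,1)$-combs and whose nodes other than the $0$~node are $0^21$ and $01$. Taking $0^21$ as the common node, the relevant cycles are: one outer cycle, the $S$ loop at the $0$~node, of length $1$; two inner cycles, namely $SC_{[3]}$ of length $L_1=3$ and $C_{[2]}C_{[3]}$ of length $L_2=5$ (the arcs $C_{[2]}$ from $0^21$ and $C_{[3]}$ from $01$ being the cascade arcs obtained by placing a further comb before the previous comb's gap is squared); and two common circuits, namely $C_{[5]}S^2$ of length $5$ and $C_{[5]}C_{[2]}S$ of length $7$, both passing through the common node exactly once, the second arising when a cascaded comb is placed before the first gap is filled. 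Since we need only $B_n$ (not $B_{n,k}$), only these lengths matter.

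Next I would substitute this data into the $B_n$ recursion of Theorem~\ref{T:CN} and convert it to an equation for $B(x)$ using $\sum_n B_{n-a}x^n=x^aB(x)$ and $\sum_n\delta_{n,a}x^n=x^a$. Writing $P(x)=\sum_r x^{L_r}=x^3+x^5$, the recursion collapses to
\[
B(x)\Bigl(1-P(x)-x\bigl(1-P(x)\bigr)-x^5-x^7\Bigr)=1-P(x),
\]
so that
\[
B(x)=\frac{1-x^3-x^5}{(1-x)(1-x^3-x^5)-x^5-x^7}
=\frac{1-x^3-x^5}{1-x-x^3+x^4-2x^5+x^6-x^7}.
\]
Finally I would carry out the shift $G(x)=x^{-4}(B(x)-1-x-x^2-x^3)$; on clearing the common denominator $D(x)$ the numerator becomes $(1-x^3-x^5)-(1+x+x^2+x^3)D(x)$, which I expect to equal $x^4(1+x+x^2+x^3+2x^4+x^6)$, yielding \eqref{e:g14}.

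The main obstacle is getting the cycle enumeration exactly right — in particular recognising the cascade arcs and hence that there are two inner cycles and two common circuits rather than one of each, and correctly reading off their lengths $3,5$ and $5,7$. Everything after that is a mechanical generating-function manipulation together with the bookkeeping of the degree-$4$ shift and the initial values $B_0=\cdots=B_3=1$; the only place an error is likely to creep in is the polynomial arithmetic in the shift step.
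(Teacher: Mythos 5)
Your proposal is correct and follows essentially the same route as the paper: the same reading of the Fig.~\ref{f:dg14} digraph (common node $0^21$; outer cycle $S$ of length $1$; inner cycles $SC_{[3]}$, $C_{[2]}C_{[3]}$ of lengths $3,5$; common circuits $C_{[5]}S^2$, $C_{[5]}C_{[2]}S$ of lengths $5,7$), the same application of Theorem~\ref{T:CN}, and the same degree-$4$ shift via Theorem~\ref{T:S=B} (your subtraction of $1+x+x^2+x^3$ is exactly the paper's $\tfrac{1-x^4}{1-x}$). Your generating-function identity and the final numerator computation $(1-x^3-x^5)-(1+x+x^2+x^3)D(x)=x^4(1+x+x^2+x^3+2x^4+x^6)$ both check out, so the argument is sound.
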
  
\begin{proof}
The $\Qset$-comb is a $(2,2,1)$-comb.  The metatile-generating digraph
for tiling with squares and such combs (Fig.~\ref{f:dg14}) has two
inner cycles ($SC_{[3]}$ and $C_{[2]}C_{[3]}$). These have lengths
$L_1=3$ and $L_2=5$.  We choose $0^21$ as the common node. The common
circuits ($C_{[5]}S^2$ and $C_{[5]}C_{[2]}S$) have lengths
$l\rb{c1}=5$ and $l\rb{c2}=7$.  There is a single outer cycle ($S$).
This has length $l\rb{o1}=1$. Then from Theorem~\ref{T:cn}
and Corollary~\ref{C:snkgf} we get
\begin{equation}\label{e:g14w}
g^{\{1,4\}}(x)=
\frac1{x^4}\biggl(
\frac{1-x^3-x^5}{1-x^3-x^5-x+x^4+x^6-x^5-x^7}-\frac{1-x^4}{1-x}\biggr),
\end{equation}
which reduces to \eqref{e:g14}.
\end{proof}  

As our first application of Theorem~\ref{T:pcn}, we also obtain the
generating function for the next sequence in the series, i.e., that
corresponding to $\Qset=\{1,5\}$ (\seqnum{A374737}).

\begin{proposition}
The generating function $g^{\{1,5\}}(x)$ for the number of restricted
combinations of $\Nset_n$ with disallowed differences $\{1,5\}$ is
given by
\begin{equation}\label{e:g15}
  g^{\{1,5\}}(x)
  =\frac{1+2x+2x^2+2x^3+2x^4+4x^5+3x^6+2x^7+x^8+x^9}
  {1-x^2-x^3-x^4+x^5-x^6-x^7-x^8-x^{10}}.
\end{equation}
\end{proposition}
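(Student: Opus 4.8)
The plan is to mirror the preceding $\Qset=\{1,4\}$ proposition but to invoke Theorem~\ref{T:4i2e} in place of Theorem~\ref{T:CN}. The $\Qset=\{1,5\}$ comb is the $(2,3,1)$-comb, and its metatile-generating digraph (Fig.~\ref{f:dg15}) has precisely the four-inner-cycle, two-errant-loop shape that Theorem~\ref{T:4i2e} was designed for. So the first step is to read off all the cycle data from Fig.~\ref{f:dg15}, substitute it into \eqref{e:4i2eBn} to obtain a recursion for $B_n$, and then pass to $S_n$ via $S_n=B_{n+5}$ (Theorem~\ref{T:S=B}, with $q=5$). Only the lengths are needed, since the comb counts $K_r$ enter only the $B_{n,k}$ formula \eqref{e:4i2eBnk}.

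First I would catalogue the cycles, taking $\mathcal{P}=0^31$ as the pseudo-common node as in the text. Tracing arcs from $\mathcal{P}$, the two inner cycles through it are $SSC_{[4]}$ (giving $L_1=4$) and $C_{[2]}SC_{[4]}$ (giving $L_2=6$); the errant loops are $C_{[3]}$ (so $L_3=3$) borne by the first inner cycle and $C_{[2]}$ (so $L_4=2$) borne by the second. There is a single outer cycle, the $S$ loop at the 0~node, of length $l\rb{o1}=1$, and the two common circuits are $C_{[6]}S^3$ and $C_{[6]}C_{[2]}S^2$, of lengths $l\rb{c1}=6$ and $l\rb{c2}=8$.

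Substituting these into \eqref{e:4i2eBn} and collecting coefficients of each $B_{n-j}$ — whereupon the Kronecker deltas at $n=6$ cancel, as do the net contributions at shifts $3,4,7,8$ — I expect to arrive at
\begin{multline*}
B_n=\delta_{n,0}-\delta_{n,2}-\delta_{n,3}-\delta_{n,4}+\delta_{n,5}+\delta_{n,9}\\
+B_{n-1}+B_{n-2}-2B_{n-5}+2B_{n-6}-B_{n-9}+B_{n-10}-B_{n-11}.
\end{multline*}
Summing against $x^n$ gives $\sum_{n\ge0}B_nx^n=(1-x^2-x^3-x^4+x^5+x^9)/D(x)$ with $D(x)=1-x-x^2+2x^5-2x^6+x^9-x^{10}+x^{11}$. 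Since no comb fits a board shorter than the comb, only the all-squares tiling exists for small $n$, so $B_0=\cdots=B_4=1$; hence the generating function for $S_n=B_{n+5}$ is
\[
G(x)=\frac1{x^5}\Biggl(\frac{1-x^2-x^3-x^4+x^5+x^9}{D(x)}-(1+x+x^2+x^3+x^4)\Biggr).
\]

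The final step is to reduce this to \eqref{e:g15}. Carrying out the subtraction and dividing the numerator by $x^5$ should yield $(1+x+2x^5-x^6-x^7-x^8-x^{10})/D(x)$, and the key observation is that both this numerator and $D(x)$ carry a common factor $1-x$: indeed $D(x)=(1-x)(1-x^2-x^3-x^4+x^5-x^6-x^7-x^8-x^{10})$ and $1+x+2x^5-x^6-x^7-x^8-x^{10}=(1-x)(1+2x+2x^2+2x^3+2x^4+4x^5+3x^6+2x^7+x^8+x^9)$. Cancelling $1-x$ gives exactly \eqref{e:g15}. The main obstacle is thus purely bookkeeping: extracting every cycle length correctly from the digraph and tracking the eleven-term coefficient collection without error; recognising the $1-x$ cancellation is what turns the degree-$11$-over-degree-$10$ intermediate fraction into the stated reduced form. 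As a sanity check I would verify the first few values ($S_0,\dots,S_6=1,2,3,5,8,13,18$) directly against the no-two-consecutive-plus-no-difference-$5$ count.
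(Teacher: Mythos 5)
Your proposal is correct and follows essentially the same route as the paper: you extract the identical cycle data ($L_1=4$, $L_2=6$, $L_3=3$, $L_4=2$, $l_{\mathrm{o}1}=1$, $l_{\mathrm{c}1}=6$, $l_{\mathrm{c}2}=8$) from the digraph, apply Theorem~\ref{T:4i2e} to get the same recursion for $B_n$, and convert to $G(x)$ via $S_n=B_{n+5}$ exactly as in the paper's proof. Your only addition is making explicit the $(1-x)$ common-factor cancellation that the paper compresses into ``which reduces to \eqref{e:g15}'', and your factorizations check out.
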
  

\begin{figure}
\begin{center}
\includegraphics[width=10.5cm]{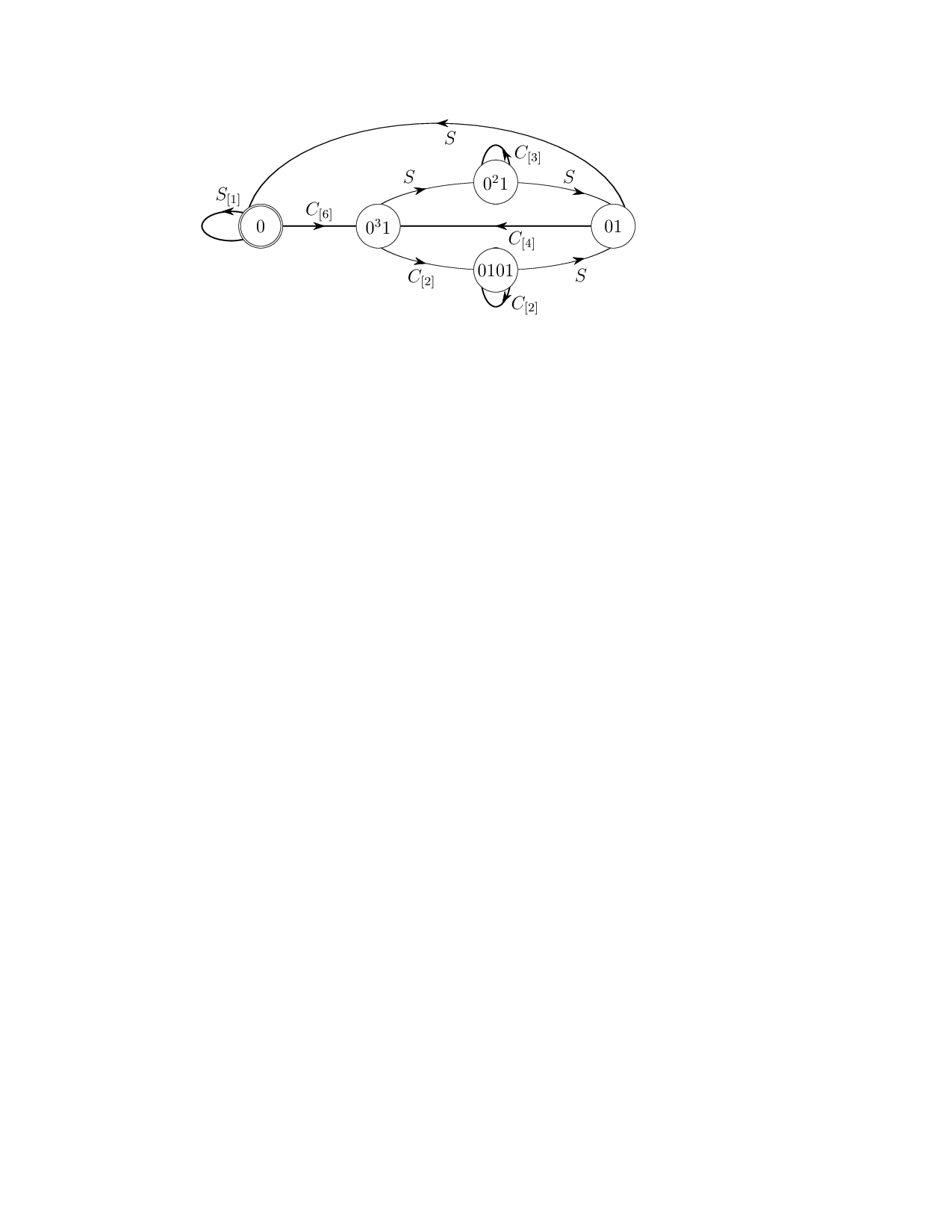}
\end{center}
\caption{Digraph for restricted-overlap tiling with squares ($S$) and
  $(2,3,1)$-combs ($C$), which corresponds to the $\Qset=\{1,5\}$ case.}
\label{f:dg15}
\end{figure}

\begin{proof}
In this case, the $\Qset$-comb is a $(2,3,1)$-comb and the
metatile-generating digraph (Fig.~\ref{f:dg15}) falls into the
category covered by Theorem~\ref{T:pcn}. We choose the $0^31$ node as the
pseudo-common node $\mathcal{P}$.
 There
is just one outer cycle ($S$). It is plain and its length $l\rb{o1}=1$.
The common circuits ($C_{[6]}S^3$ and
$C_{[6]}C_{[2]}S^2$) are non-plain and
have lengths $\tlc{1}=6$ and $\tlc{2}=8$;
their respective errant loops ($C_{[3]}$ and $C_{[2]}$) have
lengths $\lce{1}=3$ and $\lce{2}=2$.
The inner cycles passing through $\mathcal{P}$
($S^2C_{[4]}$ and $C_{[2]}SC_{[4]}$) are non-plain and
have lengths $\tilde{L}_1=4$ and $\tilde{L}_2=6$;
their respective errant loops ($C_{[3]}$ and $C_{[2]}$) have
lengths $\le{1}=3$ and $\le{2}=2$. 
From \eqref{e:pcngf} and Corollary~\ref{C:snkgf} we have
\[
g^{\{1,5\}}(x)=\frac1{x^5}\left(
\dsfrac1{1-x-\dsfrac{\dsfrac{x^6}{1-x^3}+\dsfrac{x^8}{1-x^2}}
  {1-\dsfrac{x^4}{1-x^3}-\dsfrac{x^6}{1-x^2}}}
  -\frac{1-x^5}{1-x}
\right),
\]
which reduces to \eqref{e:g15}.
\end{proof}

The metatile-generating digraph corresponding to the next case in the
$\{1,q\}$ series (Fig.~\ref{f:dg16}) does not fall into any of the
classes of digraph that we have derived a generating function for so
far. However, as we now show, the systematic application of the
digraph generating function rules (Lemmas \ref{L:par}, \ref{L:ser},
and \ref{L:loop}) can still be used to yield the generating function
in this instance. To assist in breaking the derivation into more
manageable chunks, we introduce the following notation. We let
$\Gpath{X}{Y}{Z}$ denote the generating function corresponding to all
possible walks from node $X$ to node $Y$ that do not include node $X$
a second time and never encounter node $Z$.  We let $\Gloop{X}{Z}$
denote the generating function corresponding to all possible walks
from node $X$ to itself without visiting itself or node $Z$ in
between.

\begin{proposition}
The generating function $g^{\{1,6\}}(x)$ for the number of restricted
combinations of $\Nset_n$ with disallowed differences $\{1,6\}$ is
given by
\zmlg
\begin{multline}\label{e:g16}
  g^{\{1,6\}}(x)=\\
\frac{1{\qp}x{\qp}x^2{\qp}2x^3{\qp}2x^4{\qp}2x^5{\qp}5x^6{\qp}3x^8{\qp}2x^9{\qm}x^{10}{\qp}x^{11}{\qm} 
 4x^{12}{\qm}x^{13}{\qm}2x^{14}{\qm}2x^{15}{\qm}x^{17}}{1{\qm}x{\qm}x^4{\qm}x^5{\qp}2x^6{\qm}4x^7{\qp} 
  2x^8{\qm}2x^{10}{\qp}2x^{11}{\qm}4x^{12}{\qp}3x^{13}{\qm}x^{14}{\qp}2x^{16}{\qm}x^{17}{\qp}x^{18}}.
\end{multline}
\rmlg
\end{proposition}  

\begin{figure}
\begin{center}
\includegraphics[width=10.5cm]{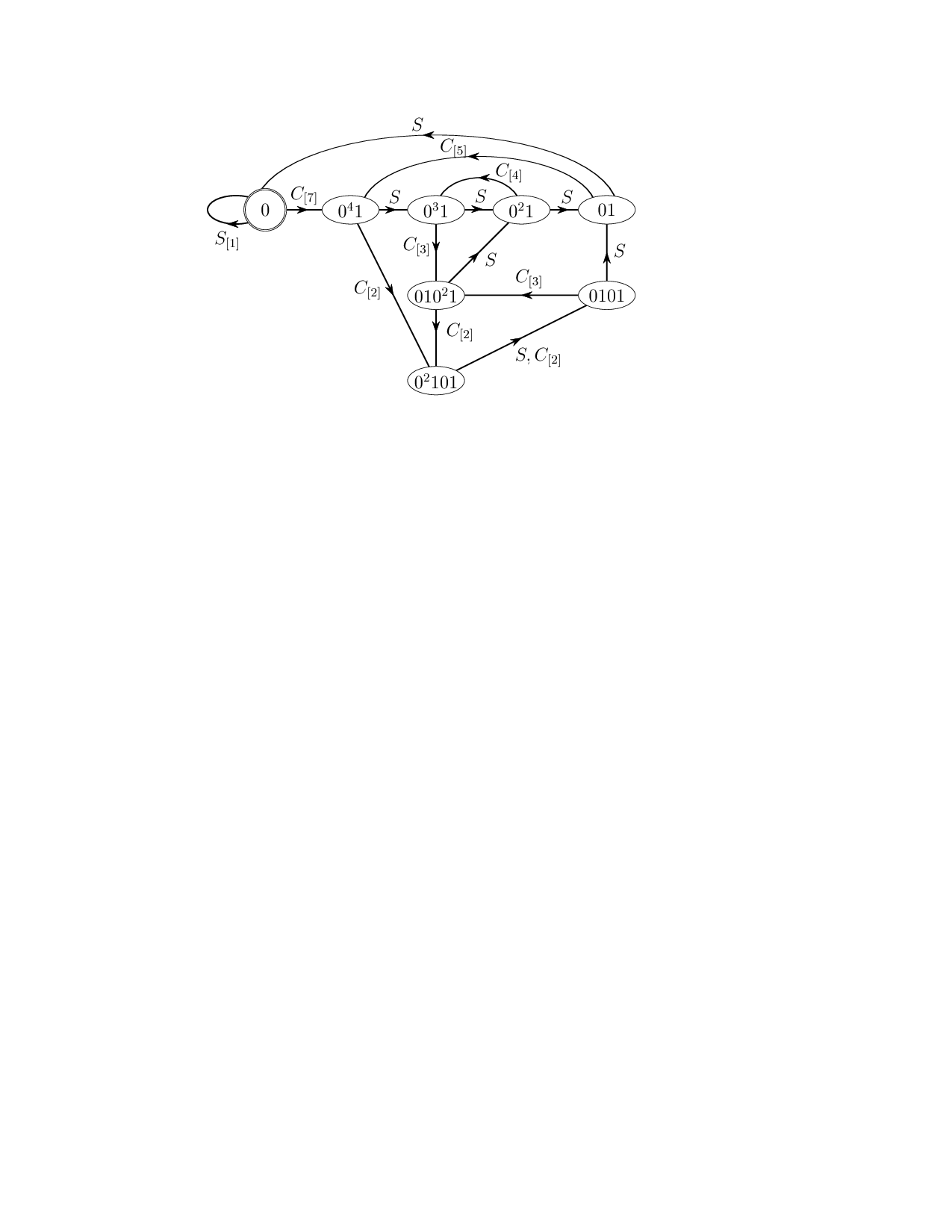}
\end{center}
\caption{Digraph for restricted-overlap tiling with squares ($S$) and
  $(2,4,1)$-combs ($C$), which corresponds to the $\Qset=\{1,6\}$
  case. A comma-separated list of labels at an arc signifies a
  multiarc with those labels.}
\label{f:dg16}
\end{figure}

\begin{proof}
The generating function $G(x,y)$ corresponding to walks starting at
the 0 node of the digraph in Figure~\ref{f:dg16} satisfies
\[
\frac1{G(x,y)}=1-x-\frac{x^7y\Gpath{0^41}{01}{0}}{1-\Gloop{0^41}{0}},
\]
where, owing to the fact that the only way to return to the $0^41$
node is via the $C_{[5]}$ arc from the 01 node, we have
$\Gloop{0^41}{0}=x^5y\Gpath{0^41}{01}{0}$. The arcs leaving the $0^41$
node are $S$ and $C_{[2]}$ (the latter contributes a factor of $x^2y$
to the corresponding term in the generating function) and the only way
to reach the 01 node is from the $0^21$ and 0101 nodes. Hence
\[
\Gpath{0^41}{01}{0}=\frac{\Gpath{0^31}{0^21}{01}+\Gpath{0^31}{0101}{01}}{1-\Gloop{0^31}{01}}
+\frac{x^2y\bigl(\Gpath{0^2101}{0^21}{01}+\Gpath{0^2101}{0101}{01}\bigr)}{1-\Gloop{0^2101}{01}}.
\]
There is a direct route from $0^31$ to $0^21$ along with those that
leave $0^31$ via the $C_{[3]}$ arc (which contributes a factor of
$x^3y$). On reaching $010^21$ there are two possible loops it can
execute ($C_{[2]}SC_{[3]}$ or $C_{[2]}^2C_{[3]}$) before reaching
$0^21$. Hence 
\[
\Gpath{0^31}{0^21}{01}=1+\frac{x^3y}{1-x^5y^2-x^7y^3}.
\]
Since the only allowed route returning to the $0^31$ node is the
$C_{[4]}$ arc from the $0^21$ node,
$\Gloop{0^31}{01}=x^4y \Gpath{0^31}{0^21}{01}$. In a similar fashion
we obtain
\[
\Gpath{0^31}{0101}{01}=\frac{x^5y^2(1+x^2y)}{1-x^5y^2-x^7y^3}, \qquad
\Gpath{0^2101}{0^21}{01}=\frac{(1+x^2y)x^3y}{1-x^4y-x^7y^2}, 
\]
and $\Gpath{0^2101}{0^21}{01}=1+x^2y$. The $1-x^7y^2/(1-x^4y)$ in
\[
\Gloop{0^2101}{01}=\dsfrac{(1+x^2y)x^5y^2}{1-\frac{x^7y^2}{1-x^4y}}
\]
results from a loop within a loop: the $SC_{[4]}$ loop can be executed
on arriving at the $0^31$ node while traversing the $SC_{[4]}C_{[3]}$
loop.  Combining the above results to give $G(x,y)$ and applying
Corollary~\ref{C:snkgf} yields \eqref{e:g16} after simplifying.
\end{proof}

\subsection{Connection with the class $\Qset=\{m,2m,\ldots,jm\}$}
\label{s:srp2}

The proof we give of the second bijection (Theorem~\ref{T:bij}) requires the
following bijection (established in previous work \cite{EA15}) between strongly
restricted permutations and tiling with $(\frac12,g)$-fences (which
are just $(\frac12,g,\frac12)$-combs), where $g$ is always a
nonnegative integer. As the teeth (originally called posts in
the context of fences) are not of integer width, it is convenient to
regard each cell as being divided into two \textit{slots} into
which a tooth can fit \cite{EA20a}.

\begin{lemma}\label{L:srpbij}
There is a bijection between (i) the permutations of $\Nset_n$ satisfying
$\pi(i)-i\in \mathcal{D}$ for each $i\in\Nset_n$ and (ii) the tilings of
an $n$-board using $(\frac12,d_j)$-fences with the left tooth always
placed in a left slot and $d_j$ equal to the non-negative
elements of $\mathcal{D}$ and $(\frac12,-d_k-1)$-fences with their
gaps aligned with cell boundaries and $d_k$ equal to the negative
elements of $\mathcal{D}$. 
If the 
left slot of cell $i$ of the $n$-board is 
occupied by the left (right) tooth of a $(\frac12,g)$-fence then
$\pi(i)=i+g$ ($\pi(i)=i-g-1$).
\end{lemma}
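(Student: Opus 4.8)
The plan is to translate the lemma into a statement about matchings by first building an explicit geometric dictionary between the slots of the board and the teeth of the two allowed fence families, and then recognising a valid tiling as nothing more than a perfect matching between the $n$ left slots and the $n$ right slots. I would fix coordinates so that cell $i$ is the interval $[i-1,i]$, its left slot is $[i-1,i-\frac{1}{2}]$ and its right slot is $[i-\frac{1}{2},i]$. A short computation then records where each fence must land. A $(\frac{1}{2},g)$-fence with $g\ge0$ whose left tooth sits in the left slot of cell $i$ is forced to have its right tooth in the right slot of cell $i+g$ (since $g$ is an integer, the right tooth lands flush in a slot). A $(\frac{1}{2},-d-1)$-fence with $d<0$ whose gap is aligned with the cell boundaries has its left tooth in the right slot of some cell and its right tooth in the left slot of cell $i$, and a direct index computation gives $\pi(i)=i+d=i-(-d-1)-1$. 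The two placement conventions in the statement (left tooth in a left slot for the non-negative elements, gaps on cell boundaries for the negative elements) are exactly what make every tooth fall inside a single slot rather than straddle a slot boundary, so this dictionary is well defined.

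The observation I would then isolate is that in both cases a single fence occupies exactly one left slot and exactly one right slot, and it links the left slot of cell $i$ to the right slot of cell $\pi(i)$ with $\pi(i)-i\in\Dset$. Reading the assignment $i\mapsto\pi(i)$ off the left slots therefore produces a candidate permutation, the two cases giving $\pi(i)=i+g$ (left tooth in the left slot) and $\pi(i)=i-g-1$ (right tooth in the left slot); in the first case $\pi(i)-i=g$ is a non-negative element of $\Dset$, and in the second $\pi(i)-i=-g-1$ is a negative element, so $\pi(i)-i\in\Dset$ automatically.

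To finish I would argue both directions from this matching picture. Given a tiling, each of the $n$ right slots is covered by exactly one tooth, and the fence covering the right slot of cell $j$ pins down the unique $i$ with $\pi(i)=j$; since each left slot is likewise covered exactly once, $\pi$ is a genuine bijection of $\Nset_n$. Conversely, given any permutation with $\pi(i)-i\in\Dset$, I would build one fence per $i$: a $(\frac{1}{2},\pi(i)-i)$-fence with its left tooth in the left slot of cell $i$ when $\pi(i)\ge i$, and a $(\frac{1}{2},i-\pi(i)-1)$-fence with its gap on the cell boundaries and its right tooth in the left slot of cell $i$ when $\pi(i)<i$; in either case the other tooth lands in the right slot of cell $\pi(i)$. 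These $n$ fences occupy all $n$ left slots (one per $i$) and all $n$ right slots (one per value $\pi(i)$, with no repeats precisely because $\pi$ is injective), so no two teeth overlap and every slot is filled; the fences also stay on the board because $1\le\pi(i)\le n$. The two constructions are visibly inverse to one another.

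The only genuine work, and the step I would be most careful about, is the geometric dictionary of the first paragraph: one must verify that the two placement conventions force the teeth into slots and compute the induced cell indices correctly, since any sign or off-by-one slip there would corrupt the formulas $\pi(i)=i+g$ and $\pi(i)=i-g-1$. Once that bookkeeping is pinned down, the bijection is immediate, because ``tiling'' and ``permutation'' are literally two encodings of the same perfect matching between left and right slots, and the defining property of a permutation is exactly the requirement that each right slot be used once.
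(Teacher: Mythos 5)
Your proposal is correct, but note that the paper itself does not prove this lemma at all: it is quoted verbatim from the earlier work of Edwards and Allen \cite{EA15}, so there is no in-paper argument to compare against. Your proof is a sound, self-contained substitute for that citation. The key points all check out: with cell $i=[i-1,i]$, an up $(\frac12,g)$-fence whose left tooth fills the left slot $[i-1,i-\frac12]$ has its right tooth on $[i+g-\frac12,i+g]$, the right slot of cell $i+g$, since the gap $g$ is an integer; and a down $(\frac12,-d-1)$-fence with gap on cell boundaries has left tooth in the right slot of cell $j$ and right tooth in the left slot of cell $i=j+(-d-1)+1$, giving $\pi(i)=i+d$ as you compute. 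Your central observation --- that every tooth occupies exactly one slot under the stated placement conventions, so each fence pairs one left slot with one right slot and a tiling is precisely a perfect matching of the $n$ left slots with the $n$ right slots --- is exactly the right abstraction: it makes the forward direction (read off $\pi$, which is automatically a bijection with $\pi(i)-i\in\Dset$) and the converse (one fence per $i$, with distinctness of the $\pi(i)$ guaranteeing all $2n$ slots are covered once) immediate, and the two maps are visibly inverse. Your formulas also reproduce the worked example in Fig.~\ref{f:combfromfences} ($\pi(2)=11$ from the up fence with $g=9$, and $\pi(5)=2$, $\pi(8)=5$, $\pi(11)=8$ from the three down fences with $g=2$), and the degenerate case $g=0$ correctly yields a square and a fixed point. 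The slot-matching framing is essentially the mechanism underlying the cited proof in \cite{EA15}, so you have not taken a fundamentally different route, but you have supplied honest detail --- in particular the straddling-avoidance role of the two placement conventions --- that the present paper delegates entirely to the reference.
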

For example,
the tiling in
Fig.~\ref{f:combfromfences} corresponds to the permutation
\[
\biggl(\!\!\begin{array}{ccccccccccccc}1&2&3&4&5&6&7&8&9&10&11&12&13\\
  1&11&3&4&2&6&7&5&9&10&8&12&13
\end{array}\!\!\biggr).
\]
We refer to a fence corresponding to an excedance (i.e., a position $i$
such that $\pi(i)>i$) as an \textit{up fence} and a fence 
giving $\pi(i)-i<0$ as a
\textit{down fence}. The case of a fixed point ($\pi(i)=i$)
corresponds to a gapless fence. This is just an ordinary square tile
(that must be aligned with a cell). 

The following bijection is an extension of that noted but not proved
by Balti\'c \cite{Bal12}. The proof we give requires the notion of a
comb with $t$ teeth all of length $w$ separated by gaps of length $g$, which we
refer to as a $(w,g;t)$-comb \cite{AE23,AE24}.

\begin{theorem}\label{T:bij}
There is a bijection between the $k$-subsets of $\Nset_n$ with
disallowed differences
$\Qset=\{m,2m,\ldots,jm\}$, where $j,m\geq 1$, and the permutations
$\pi$ of $\Nset_{n+jm}$ satisfying $\pi(i)-i\in\{-m,0,jm\}$ for all
$i\in\Nset_{n+jm}$ that contain $k$ excedances.
\end{theorem}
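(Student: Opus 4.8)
The plan is to exhibit the bijection explicitly and to verify both directions directly, rather than routing through the tiling correspondences; I will indicate at the end how the same map arises from Lemma~\ref{L:srpbij}. Write $q=jm$ for the largest element of $\Qset$. Given a $\Qset$-valid $k$-subset $\Sset=\{s_1<\cdots<s_k\}$ of $\Nset_n$, I would define $\pi$ on $\Nset_{n+jm}$ by declaring, for each $s\in\Sset$, the cycle $(s,\,s+jm,\,s+(j-1)m,\,\ldots,\,s+m)$; that is, $\pi(s)=s+jm$ and $\pi(s+tm)=s+(t-1)m$ for $t=1,\ldots,j$, with every point not of the form $s+tm$ (for $s\in\Sset$, $0\le t\le j$) fixed. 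The cells $s,s+m,\ldots,s+jm$ touched by $s$ are exactly the $j+1$ tooth positions of the $(1,m-1;j+1)$-comb associated with $\Qset$, and the single excedance of each such cycle sits at the leftmost tooth. Since $s\le n$ every touched cell lies in $\Nset_{n+jm}$, and by construction $\pi(i)-i\in\{-m,0,jm\}$ for all $i$.

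First I would check that this is well defined, i.e.\ that distinct elements of $\Sset$ touch disjoint sets of cells. If $s+am=s'+bm$ with $s\ne s'$ and $a,b\in\{0,\ldots,j\}$, then $s'-s=(a-b)m\in\{\pm m,\pm2m,\ldots,\pm jm\}$; but $\Sset$ being $\Qset$-valid forbids $\abs{s'-s}\in\{m,2m,\ldots,jm\}$, a contradiction. Hence the cycles for different $s$ are vertex-disjoint, $\pi$ is a genuine permutation of $\Nset_{n+jm}$, and its excedances (the positions $i$ with $\pi(i)=i+jm$) are precisely the elements of $\Sset$, so there are exactly $k$ of them.

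The substantive step, which I expect to be the main obstacle, is the reverse direction: that every permutation $\pi$ of $\Nset_{n+jm}$ with $\pi(i)-i\in\{-m,0,jm\}$ is produced by this construction from the subset $\Sset:=\{i:\pi(i)=i+jm\}$ of its excedance positions (which necessarily lie in $\Nset_n$). The heart of the matter is the claim that each nontrivial cycle of such a $\pi$ has exactly one excedance and is of the form above. I would prove this by examining the largest element $M$ of a given cycle: maximality forces $\pi(M)=M-m$, and the preimage of $M$ can only be $M-jm$ (the options $M$ and $M+m$ being excluded by nontriviality and maximality), so $M-jm$ is an excedance in the cycle. Following the down-steps $M\mapsto M-m\mapsto M-2m\mapsto\cdots$, each intermediate cell lies in the cycle (so is not fixed) and cannot be an excedance, since $\pi(M-tm)=M+(j-t)m>M$ for $1\le t<j$ would contradict maximality; the chain therefore descends for exactly $j$ steps and closes at $M-jm$, giving the cycle $(M-jm,\,M,\,M-m,\,\ldots,\,M-(j-1)m)$ with its unique excedance at $M-jm$.

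Granting this, $\pi$ is completely determined by $\Sset$, so the excedance map is the sought inverse; it remains to confirm $\Sset$ is $\Qset$-valid. If $s,s'\in\Sset$ with $s'-s=tm$ and $1\le t\le j$, then $s'=s+tm$ would be one of the down-step cells $s+m,\ldots,s+jm$ of the cycle generated by $s$ and hence not an excedance, contradicting $s'\in\Sset$. Thus no two elements of $\Sset$ differ by an element of $\Qset$, the two maps are mutually inverse, and subset size equals the number of excedances, which completes the bijection. As an alternative route I could instead invoke Lemma~\ref{L:srpbij} with $\Dset=\{-m,0,jm\}$ to convert $\pi$ into a tiling by squares, up $(\frac12,jm)$-fences, and down $(\frac12,m-1)$-fences, and then recognize by the same slot-by-slot forcing that each up fence drags a unique chain of $j$ down fences into one $(1,m-1;j+1)$-comb; combined with Theorem~\ref{T:S=B} this recovers the count $S^\Qset_{n,k}$. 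Either way, the single-excedance cycle-forcing lemma is the crux.
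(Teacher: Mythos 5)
Your proof is correct, and it takes a genuinely different route from the paper. The paper proves the theorem entirely at the level of tilings: it invokes Lemma~\ref{L:srpbij} to convert permutations with $\pi(i)-i\in\{-m,0,jm\}$ into tilings of an $(n+jm)$-board by squares, up $(\frac12,jm)$-fences, and down $(\frac12,m-1)$-fences, then argues slot-by-slot (the right slot of the cell holding an up fence's left tooth can only be filled by a down fence's left tooth, and so on) that each up fence necessarily encloses a chain of exactly $j$ down fences, the $j+1$ tiles jointly occupying the cells of a $(1,m-1;j+1)$-comb; the identification with $k$-subsets then rides on the comb-tiling correspondence already in place. You instead work directly with the permutations and prove the crux as a cycle-forcing lemma: taking the largest element $M$ of a nontrivial cycle, the constraints force $\pi(M)=M-m$, force the preimage of $M$ to be $M-jm$, and force the descending chain $M\mapsto M-m\mapsto\cdots\mapsto M-(j-1)m\mapsto M-jm$, so every nontrivial cycle is $(M-jm,\,M,\,M-m,\ldots,M-(j-1)m)$ with a unique excedance at $M-jm$. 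Your verification details are sound: disjointness of the cycles generated by distinct $s,s'\in\Sset$ follows exactly because $\abs{s'-s}$ would otherwise lie in $\{m,\ldots,jm\}=\Qset$, and conversely the excedance set of any admissible $\pi$ is $\Qset$-valid because $s+tm$ ($1\le t\le j$) is a down-step cell of the cycle generated by $s$. Your argument that $\pi(M-tm)=M+(j-t)m$ would contradict maximality (and that intermediate cells, being in the cycle, are not fixed) correctly closes the chain, including the boundary case $j=1$. What each approach buys: yours is self-contained and elementary, needing neither the fence bijection nor any tiling machinery, and it makes explicit the structural fact (one excedance per cycle, rigid cycle shape) that the paper leaves implicit in the geometry of interlocking fences; the paper's route, by contrast, keeps the result inside the comb-tiling framework used throughout, so the enumerative consequences (Theorem~\ref{T:S=B}, Corollary~\ref{C:bij} via the $(1,m-1;j+1)$-comb results) follow with no extra work. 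The alternative you sketch at the end is essentially the paper's proof, so you have in effect supplied both.
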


\begin{figure}
\begin{center}
\includegraphics[width=11.5cm]{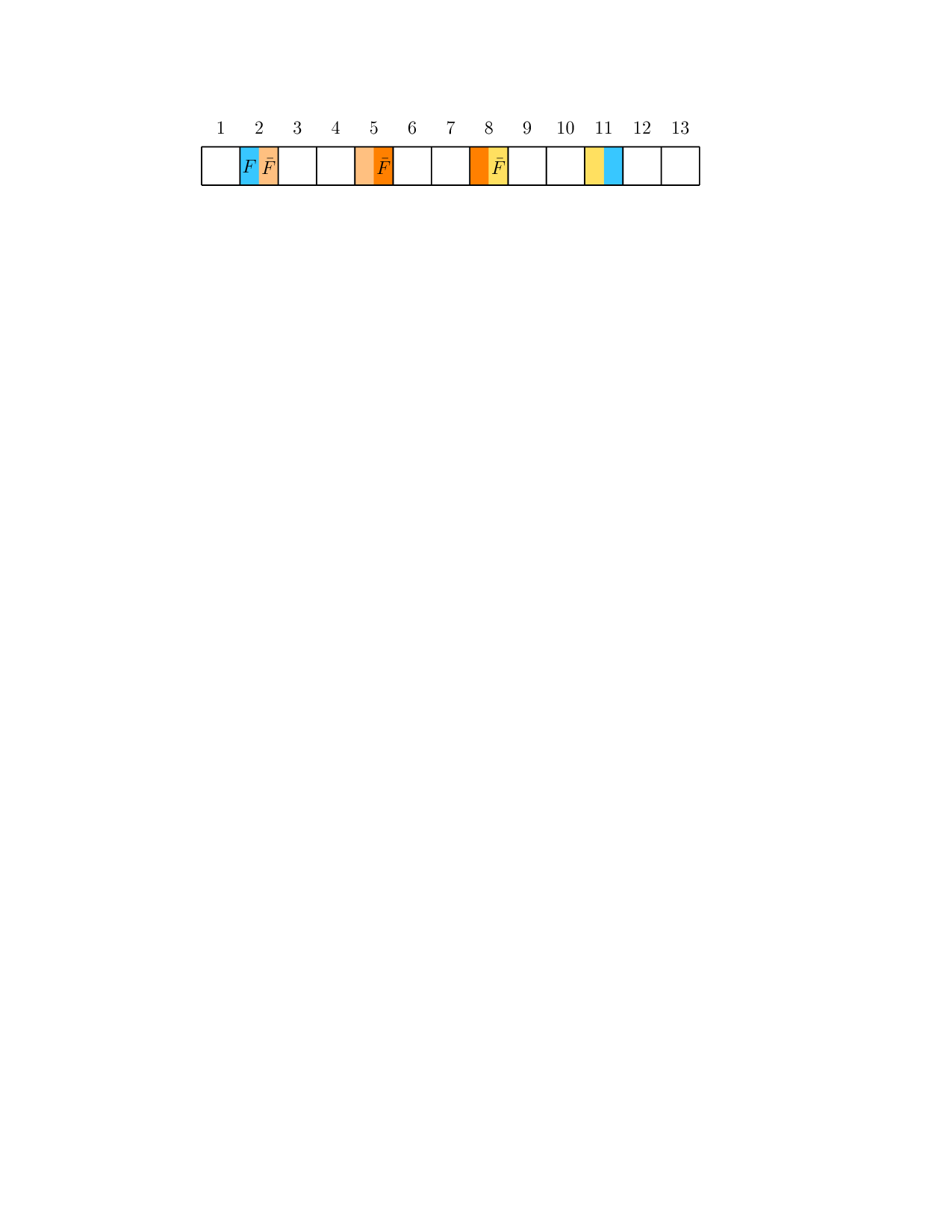}
\end{center}
\caption{A 13-board tiled with an up $(\frac12,9)$-fence ($F$; blue),
  three down $(\frac12,2)$-fences ($\down{F}$; yellow, orange, and
  ochre), and 9~squares.}
\label{f:combfromfences}
\end{figure}

\begin{proof}
From Lemma~\ref{L:srpbij}, the number of such permutations is the
number of ways to tile an $(n+jm)$-board using squares aligned with
the cell boundaries ($S$), up $(\frac12,jm)$-fences ($F$), and down
$(\frac12,m-1)$-fences ($\down{F}$).  Suppose the left tooth of the
first $F$ on the board occupies the left slot of cell $i$. The only
possible way to fill the right slot of cell $i$ is with the left tooth
of an $\down{F}$. The right tooth of the $\down{F}$ lies on the left
slot of cell $i+m$. If $j=1$, the right slot of this cell is occupied
by the right tooth of the first $F$. Otherwise it must again be
occupied by the left tooth of another $\down{F}$ and similarly until
the right tooth of the first $F$ is reached at cell $i+jm$. Hence this
$F$ has $j$ $\down{F}$ placed end-to-end in its interior. Combined,
these $j+1$ tiles occupy exactly the same cells as a
$(1,m-1;j+1)$-comb (Fig.~\ref{f:combfromfences} shows an example when
$j=m=3$). All subsequent $F$ on the board behave similarly. Note that
owing to the fact that any $\down{F}$ must have its left tooth in the
right slot of a cell, such tiles can only appear on the board in
conjunction with $j-1$ other $\down{F}$ surrounded by an $F$.  Hence
there is a bijection between tilings of an $(n+jm)$-board using $k$
$F$, $jk$ $\down{F}$ and $n+jm-(j+1)k$ $S$ and the tilings of a board
of the same length using $k$ $(1,m-1;j+1)$-combs and
$n+jm-(j+1)k$ squares.
\end{proof}

We let $P^\Dset_n$ denote the number of permutations $\pi$ of
$\Nset_n$ such that $\pi(i)-i\in\Dset$ for all $i\in\Nset_n$ and
$P^\Dset_{n,k}$ denote the number of such permutations with $k$
excedances.  We now use the bijection and previously obtained results
to relate $P^{\{-m,0,jm\}}_n$ and $P^{\{-m,0,jm\}}_{n,k}$ to some
generalized Fibonacci numbers and the coefficients of their
corresponding polynomials, respectively.  We let $f^{(t)}_n$ denote
the $n$-th $(1,t)$-bonacci number defined by
$f^{(t)}_n=f^{(t)}_{n-1}+f^{(t)}_{n-t}+\delta_{n,0}$,
$f^{(t)}_{n<0}=0$, and $f^{(t)}_{n}(x)$ denote the $(1,t)$-bonacci
polynomial defined by
$f^{(t)}_n(x)=f^{(t)}_{n-1}(x)+xf^{(t)}_{n-t}(x)+\delta_{n,0}$,
$f^{(t)}_{n<0}(x)=0$.

\begin{corollary}\label{C:bij}
For $m,j\ge1$, $i,k\ge0$, and $r=0,\ldots,m-1$, we have
\begin{align}\label{e:Pn}
P^{\{-m,0,jm\}}_{im+r}&=
\bigl(f^{(j+1)}_i\bigr)^{m-r}\bigl(f^{(j+1)}_{i+1}\bigr)^r,\\
P^{\{-m,0,jm\}}_{im+r,k}&=[x^k]
\biggl(\bigl(f^{(j+1)}_i(x)\bigr)^{m-r}\bigl(f^{(j+1)}_{i+1}(x)\bigr)^r\biggr),
\label{e:Pnk}
\end{align}
where $[x^k]R(x)$ denotes the coefficient of $x^k$ in $R(x)$.
\end{corollary}
\begin{proof}
Using a previously obtained result \cite[Corollary~11]{All22}, for $l\ge0$, $m\ge1$, $t\ge2$, and
$r=0,\ldots,m-1$, we have
\begin{equation}\label{e:Sn=ff}
S^{\{m,2m,\ldots,(t-1)m\}}_{lm+r}=
\bigl(f^{(t)}_{l+t-1}\bigr)^{m-r}\bigl(f^{(t)}_{l+t}\bigr)^r.
\end{equation}
As a consequence of the bijection given in Theorem~\ref{T:bij}, 
$P^{\{-m,0,jm\}}_{n+jm}=S^{\{m,2m,\ldots,jm\}}_n$ and so
$P^{\{-m,0,jm\}}_{n}=S^{\{m,2m,\ldots,jm\}}_{n-jm}$.
Replacing $n$ by $im+r$ in this and using
\eqref{e:Sn=ff} with $t$ replaced by $j+1$ and $l$ replaced by $i-t+1$
gives \eqref{e:Pn}. Similarly, using another result from previous work
\cite[Corollary~10]{All22}, we obtain
\[
S^{\{m,2m,\ldots,(t-1)m\}}_{lm+r,k}=[x^k]
\biggl(\bigl(f^{(t)}_{l+t-1}(x)\bigr)^{m-r}\bigl(f^{(t)}_{l+t}(x)\bigr)^r
\biggr).
\]
Combining this with the result from Theorem~\ref{T:bij} that
$P^{\{-m,0,jm\}}_{n+jm,k}=S^{\{m,2m,\ldots,jm\}}_{n,k}$ in the same
way gives \eqref{e:Pnk}.
\end{proof}  

Sequences in the OEIS that relate to Theorem~\ref{T:bij} and
Corollary~\ref{C:bij} along with the
corresponding $\mathcal{D}$ are as follows: \seqnum{A000045}
$\{-1,0,1\}$, \seqnum{A006498} $\{-2,0,2\}$,
\seqnum{A006500} $\{-3,0,3\}$, 
\seqnum{A031923} $\{-4,0,4\}$,
\seqnum{A224809} $\{-2,0,4\}$,
\seqnum{A224810} $\{-3,0,6\}$,
\seqnum{A224811} $\{-2,0,8\}$,
\seqnum{A224812} $\{-2,0,10\}$,
\seqnum{A224813} $\{-2,0,12\}$,
\seqnum{A224814} $\{-3,0,9\}$,
\seqnum{A224815} $\{-4,0,8\}$.

\section{Connections with compositions}
\label{s:comp}

A composition of a positive integer $n$ is a way of expressing $n$ as
a sum of positive integers (referred to as \textit{parts}) where the
order of the parts is significant; thus, for example, 1+2 and 2+1 are
different compositions of 3. For convenience, it is customary to take
the number of compositions of 0 (any negative integer) as being 1 (0).
It is well known that the number of compositions of $n$ is the same as
the number of tilings of an $n$-board using $m$-ominoes for arbitrary
nonnegative integers $m$ and hence that the number of compositions of
$n$ into distinct parts $m_1,m_2,\ldots,m_i,\ldots$ is given by
$B_n=\delta_{n,0}+\sum_{i=1}B_{n-m_i}$, $B_{n<0}=0$ for all integers
$n$. It follows that any tiling in which all possible metatiles have
distinct lengths corresponds to a composition into parts equal to the
lengths of the metatiles.

\subsection{Compositions with parts drawn from finite sets}

We first examine two classes of $\Qset$ where there are a finite
number of possible metatiles (of distinct lengths) when
restricted-overlap tiling with squares and $\Qset$-combs. Recall that
the number of possible metatiles is finite iff $2r\geq q$, where $r$
is the length of the rightmost tooth of the $\Qset$-comb
\cite[Lemma~2]{All-Comb}. It is sometimes more convenient to express
this condition as $r\geq s-1$, where $s=q+1-r$ is the sum of the
lengths of the teeth and gaps up to the rightmost tooth.

\begin{lemma}\label{L:k<=2}
Suppose $q>0$ is the largest element of $\Qset$. Then $S^\Qset_{n}$ is
the number of compositions of $n+q$ into parts which are drawn from a
finite set of distinct parts iff every metatile, when
restricted-overlap tiling with squares and $\Qset$-combs, contains no
more than two combs. Then the parts are 1, $q+1$, and $q+1+p_i$ (for $i=1,\ldots,\abs{\Nset_q\setminus\Qset}$), where
$p_i$ is the cell number of the $i$-th empty cell in the $\Qset$-comb.
\end{lemma}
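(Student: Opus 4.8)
The plan is to pass to the tiling side through Theorem~\ref{T:S=B}, so that $S^\Qset_n=B^\Qset_{n+q}$, and to use the fact that every restricted-overlap tiling splits uniquely into metatiles laid end to end. Hence $B^\Qset_{n+q}$ counts the arrangements of metatiles of total length $n+q$, and as recalled just before the statement this count is the number of compositions of $n+q$ into a finite set of \emph{distinct} parts exactly when (i) there are finitely many metatiles and (ii) distinct metatiles have distinct lengths, the parts then being the metatile lengths. I would therefore reduce the lemma to showing that (i) and (ii) together are equivalent to every metatile containing at most two combs, and to reading off the lengths in that case.

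For the ``if'' direction I assume every metatile has at most two combs and list them. The metatiles with no comb and with one comb are the square $S$ (length $1$) and the filled comb $CS^g$ (length $q+1$). In a two-comb metatile the second comb must overlap the first, since otherwise filling the first comb's gaps with squares would split off a filled comb and contradict unsplittability; thus its leftmost cell sits at one of the empty cells $p_i$ of the $\Qset$-comb, and (as no third comb is available, by hypothesis) the remaining gaps are filled by squares, giving a metatile spanning cells $0$ to $p_i+q$, of length $q+1+p_i$. So the metatiles are precisely $S$, $CS^g$, and one two-comb metatile for each empty cell; since $1<q+1<q+1+p_1<\cdots<q+1+p_g$ their lengths are distinct, which yields (i) and (ii) and identifies the parts as $1$, $q+1$, and the numbers $q+1+p_i$.

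For the ``only if'' direction I argue contrapositively: suppose some metatile has at least three combs, and let its rightmost comb have leftmost cell $a$, so the metatile has length $a+q+1$. If $a>q$, then some comb reaches past cell $q$; but a comb placed at a gap $p_i\le q-r$ of the first comb extends beyond cell $q$ only when $p_i>r$, which with $p_i\le q-r$ forces $2r<q$, so by Lemma~2 in \cite{All-Comb} there are infinitely many metatiles and no finite part set works, violating (i). If instead $a\le q$, then $a$ is an empty cell of the first comb, and placing combs only at cells $0$ and $a$ with squares in every remaining gap is itself a two-comb metatile of the same length $a+q+1$; this is a second metatile of that length, violating (ii). In either case the composition property fails, completing the equivalence.

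The step I expect to be the main obstacle is the geometric bookkeeping: turning ``the second comb overlaps the first'' into the exact statements that its leftmost cell is an empty cell $p_i$ and that the span is exactly $[0,p_i+q]$, and checking that the two-comb configurations produced in both directions really are unsplittable metatiles. Pinning down that, under the finiteness criterion $2r\ge q$, any third comb must lie at a cell $a\le q$ (so that the length-collision argument applies) is the point where the interaction between the comb geometry and the $2r\ge q$ condition has to be handled with care.
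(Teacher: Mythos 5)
Your proposal is correct and takes essentially the same route as the paper: both pass to metatiles via Theorem~\ref{T:S=B}, read off the lengths $1$, $q+1$, and $q+1+p_i$ of the at-most-two-comb metatiles, and, for the converse, combine the finiteness criterion $2r\ge q$ of Lemma~2 in \cite{All-Comb} with a duplicate-length construction arising from a metatile containing a third comb. Your variant builds the colliding metatile by keeping the first and rightmost combs and filling the rest with squares, whereas the paper deletes the extra comb's non-overlapped cells and replaces them with squares; these are interchangeable forms of the same length-collision argument.
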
  
\begin{proof}
If there are a finite number of parts, there are a finite number of
metatiles. This means that, in all cases, the start of the final comb
in a metatile containing more than one comb must lie within the first
comb (see the proof of Lemma~2 in All24). If there were a
third comb present, the cells it occupies without overlapping (of
which there must be at least one, i.e., its cell 0) could be replaced
by squares and give rise to a metatile of the same length. The parts
would then not be distinct. The square and filled comb metatiles are
of length 1 and $q+1$, respectively. The remaining metatiles contain
two combs. If the start of the second comb is in cell $p_i$ of the
first comb, the resulting metatile (completed by filling any remaining
empty cells with squares) is easily seen to have a length of
$q+1+p_i$.
\end{proof}  

In the proofs of the following two theorems we employ combs that, in
general, have a periodic pattern of teeth and gaps before the
rightmost tooth is reached. Extending the notation used for combs with
teeth all the same length and gaps all the same length
\cite{AE23,AE24}, we call an $(l,g,l,g,\ldots,r)$-comb with $t$
teeth an $(l,g,r;t)$-comb and we refer to an
$(l,g,m,h,l,g,m,h,\ldots,r)$-comb with $t$ teeth as an
$(l,g,m,h,r;t)$-comb. Note that an $r$-omino (an $(l,g,r)$-comb) can
be regarded as an $(l,g,r;t)$- or $(l,g,m,h,r;t)$-comb with
$t=1$ ($t=2$).  
Recall that the inclusion of 0 in $\Qset$ is merely for convenience in the
statement of the theorems and is to be removed when giving particular
instances of $\Qset$ that the theorems apply to.

\begin{theorem}\label{T:per}
If
\begin{multline*}
\Qset=\{0,1,\ldots,l-1,l+g,l+g+1,\ldots,2l+g-1,2(l+g),2(l+g)+1,\ldots,\\
(t-1)(l+g),\ldots, (t-1)(l+g)+r-1\equiv q\}
\end{multline*}
for some $l\ge g>0$, $t\in\Zset^+$, $q>0$, and, for $t>1$, $r\ge
(t-1)(l+g)-1$, then $S_n^\Qset$ is the number of compositions of $n+q$
into parts 1 and $q+1$ when $t=1$ and parts 1, $q+1$,
$q+l+1,\ldots,q+l+g$,
$q+2l+g+1,\ldots,q+2(l+g),\ldots,q+(t-1)l+(t-2)g+1,\ldots,q+(t-1)(l+g)$
otherwise.  Note that $\Qset$ reduces to $\{0,1,\ldots,r-1=q\}$ when
$t=1$.
\end{theorem}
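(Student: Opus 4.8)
The plan is to transfer the whole question to the tiling picture and then apply Lemma~\ref{L:k<=2}. By Theorem~\ref{T:S=B} we have $S^\Qset_n=B^\Qset_{n+q}$, so it suffices to count restricted-overlap tilings of an $(n+q)$-board with squares and the $\Qset$-comb. The first, routine, step is to verify that this $\Qset$ is precisely the tooth-cell set of the $(l,g,r;t)$-comb: for $k=1,\ldots,t-1$ the $k$-th tooth occupies cells $(k-1)(l+g),\ldots,(k-1)(l+g)+l-1$ and the final tooth occupies cells $(t-1)(l+g),\ldots,q$, so the empty (gap) cells of the comb are exactly the $p\in\bigcup_{k=1}^{t-1}\{(k-1)(l+g)+l,\ldots,k(l+g)-1\}$. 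The comb has length $q+1$, and its rightmost tooth has length $r$.

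The case $t=1$ is immediate: here $\Qset=\{0,1,\ldots,q\}$, the comb is a $(q+1)$-omino, the only two metatiles are the square $S$ (length $1$) and that omino (length $q+1$), and a tiling is simply a composition of $n+q$ into parts $1$ and $q+1$. For $t>1$ I would first observe that the hypothesis $r\ge(t-1)(l+g)-1$ is exactly the finiteness condition $2r\ge q$, since $q=(t-1)(l+g)+r-1$; hence by Lemma~2 of \cite{All-Comb} there are only finitely many metatiles. The goal is then to show that \emph{every metatile contains at most two combs}, for Lemma~\ref{L:k<=2} will then identify $S^\Qset_n$ as the number of compositions of $n+q$ into the distinct parts $1$, $q+1$, and $q+1+p$ as $p$ ranges over the gap cells above. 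Substituting the gap cells $(k-1)(l+g)+l,\ldots,k(l+g)-1$ and adding $q+1$ reproduces, block by block in $k$, the list $q+l+1,\ldots,q+l+g$, then $q+2l+g+1,\ldots,q+2(l+g)$, and so on up to $q+(t-1)l+(t-2)g+1,\ldots,q+(t-1)(l+g)$, which are exactly the parts in the statement.

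The crux, and the step I expect to be the main obstacle, is the claim that no metatile uses a third comb. I would argue this by showing that once a second comb $C'$ is placed with its leftmost cell in a gap cell $p$ of the first comb $C$, no empty cell survives strictly to the right of $p$, so that the block must be completed with squares. Writing $s=(t-1)(l+g)$ for the starting cell of the last tooth, the largest gap cell is $s-1$, so $p\le s-1$; combined with $r\ge s-1$ this gives $p+s\le 2s-1\le s+r=q+1$, whence the last tooth of $C'$, which occupies $[p+s,p+q]$, already covers the entire extension $[q+1,p+q]$ beyond $C$. It then remains to handle the overlap region $[p,q]$, where the hypothesis $l\ge g$ should force the shifted short teeth of $C'$ to land on the gap cells of $C$ (and conversely), so that $C$ and $C'$ together leave that region gapless. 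This interlocking of the two periodic tooth patterns is the delicate point: one must check that for every admissible shift $p$ no gap cell of $C'$ coincides with a surviving gap cell of $C$, and it is here that the argument has to be carried out most carefully.

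Finally, granted that every metatile has at most two combs, Lemma~\ref{L:k<=2} applies verbatim, the metatile lengths are distinct, and the standard correspondence between tilings by metatiles of distinct lengths and compositions into parts equal to those lengths (the observation recalled just before Lemma~\ref{L:k<=2}) yields $B^\Qset_{n+q}$, and hence $S^\Qset_n$, as the number of compositions of $n+q$ into the stated parts.
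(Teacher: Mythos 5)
Your frame is the same as the paper's: identify the $\Qset$-comb as an $(l,g,r;t)$-comb, settle $t=1$ directly, observe that $r\ge(t-1)(l+g)-1$ is exactly the finiteness condition $2r\ge q$, compute the parts $q+1+p_i$ from the gap cells, and invoke Lemma~\ref{L:k<=2}; all of those steps are correct and agree with the paper's proof. The genuine gap is at the step you yourself flag as the crux, and the argument you sketch for it is false. Your claim that once $C'$ starts at a gap cell $p$ of $C$ ``no empty cell survives strictly to the right of $p$'' fails whenever $p$ is not the first cell of its gap and $t\ge3$: writing $p=(k_0-1)(l+g)+l+\delta$ with $0\le\delta\le g-1$, the teeth of $C'$ land in each later gap of $C$ only from offset $\delta$ onward, so the cells at offsets $0,\ldots,\delta-1$ of every subsequent gap of $C$ are gap cells of \emph{both} combs and remain empty. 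Concretely, for the $(2,2,7;3)$-comb of Fig.~\ref{f:twocombmetatiles}(a) ($l=g=2$, $t=3$, $r=7$, $q=14$), place $C$ at cell $0$ (teeth on $\{0,1\}$, $\{4,5\}$, $\{8,\ldots,14\}$), a square on cell $2$, and $C'$ at $p=3$ (teeth on $\{3,4\}$, $\{7,8\}$, $\{11,\ldots,17\}$): cell $6$ survives empty, so the fallback check you propose (``no gap cell of $C'$ coincides with a surviving gap cell of $C$'') also fails. The interlocking you hope $l\ge g$ provides only works when $\delta=0$.

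Worse, the surviving cell can host the leftmost cell of a third comb: a comb at cell $6$ (teeth on $\{6,7\}$, $\{10,11\}$, $\{14,\ldots,20\}$) is a legal restricted-overlap placement, since its leftmost cell sits on an empty cell and all its other overlaps are tooth-on-tooth at non-leftmost cells, and it produces a three-comb metatile $CSCC$ of length $21$ alongside the two-comb metatile $CS^2C$ of the same length, so the parts are not distinct and Lemma~\ref{L:k<=2} cannot apply. This is not a repairable technicality of your write-up: for this comb the differences $3$, $3$, $6$ all lie outside $\Qset=\{1,4,5,8,\ldots,14\}$, so the subset $\{1,4,7\}$ is admissible and a direct count gives $S_7=19$, whereas the compositions of $21$ into parts $\{1,15,17,18,21,22\}$ number $18$; hence in the regime $g\ge2$, $t\ge3$ the at-most-two-combs claim (and indeed the stated count) fails, and no amount of careful checking of the interlock can close your gap there. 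Your argument does go through exactly when $\delta=0$ is forced, i.e., $g=1$, or when $t\le2$ (a single gap, in which two comb starts would be at distance at most $g-1<l$, which is not an admissible difference) --- and these cases cover every OEIS instance listed after the theorem. For what it is worth, the paper's own justification of the crucial step considers only comb starts within the \emph{first} gap of the initial comb, so your instinct that this is the delicate point was sound; but as written your proposal both defers the decisive claim and, in the form sketched, asserts something false.
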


\begin{figure}
\begin{center}
\includegraphics[width=10cm]{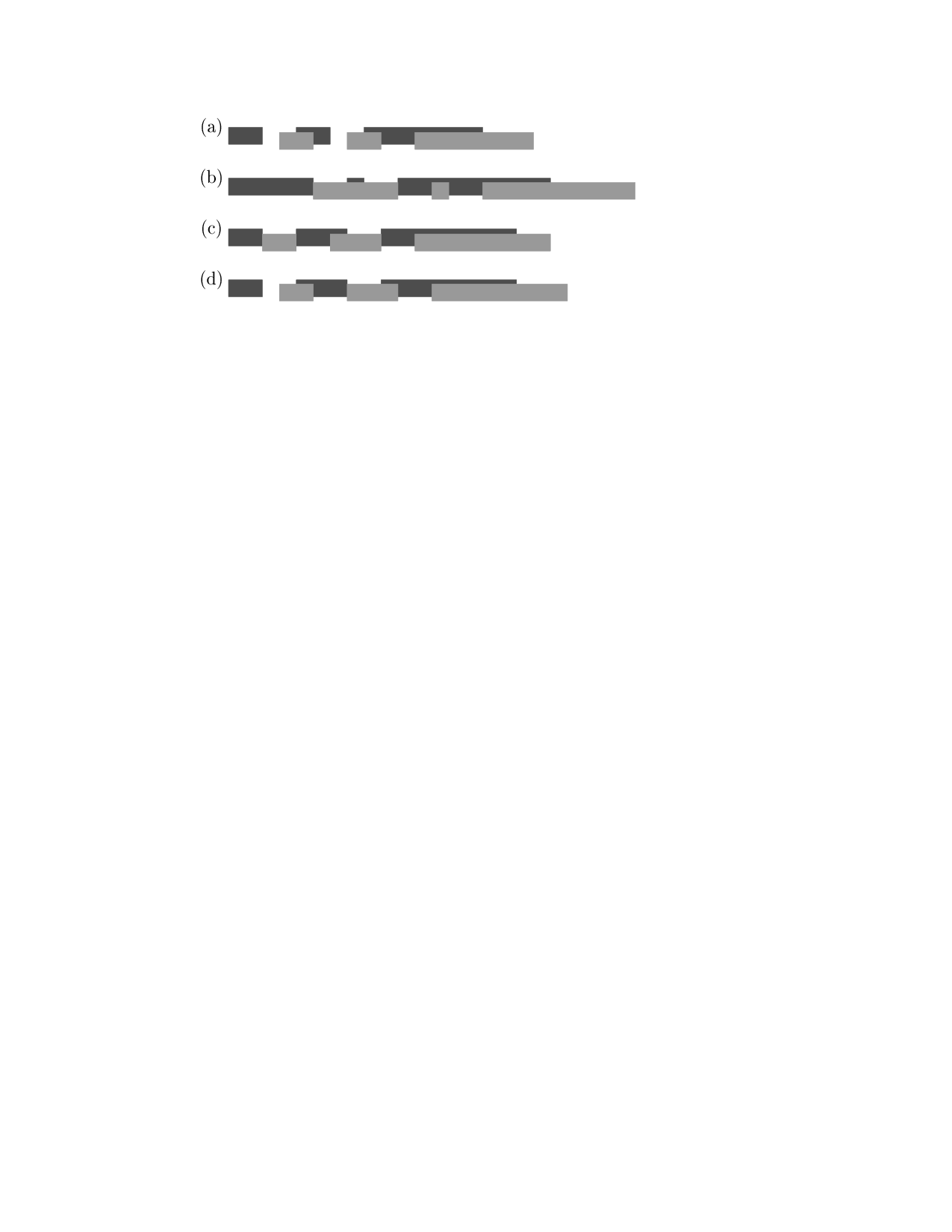}
\end{center}
\caption{Construction of instances of metatiles containing two combs
  when the combs are (a)~(2,2,7;3)-combs, (b)~(5,2,1,2,9;3)-combs,
  and (c,d)~(2,2,3,2,8;3)-combs. In each case, the second comb is
  displaced downwards a little from its final position so that
  parts of the cells of the first comb it overlaps are visible. 
}
\label{f:twocombmetatiles}
\end{figure}

\begin{proof}
The $\Qset$-comb in this case is an $(l,g,r;t)$-comb.  When $t=1$,
there are no two-comb metatiles; the condition $r>1$ ensures that the
filled comb metatile (which in this case is just the comb) is not the
same length as the square. For $t>1$, the condition $r\ge
(t-1)(l+g)-1$ is the same as $2r\geq q$ and thus establishes that
there are a finite number of metatiles. Then the condition $l\ge g$
ensures that a metatile can have at most two combs
(Fig.~\ref{f:twocombmetatiles}(a)). This is because if $g>l$, the
leftmost tooth of a comb and the leftmost cell of another comb could
both be placed within the first gap of the initial comb. The result
then follows from Lemma~\ref{L:k<=2}.
\end{proof}  

OEIS sequences corresponding to a $\Qset$ that Theorem~\ref{T:per}
applies to along with the set itself are as follows:
\seqnum{A000045} \{1\}, 
\seqnum{A006498} \{2\},
\seqnum{A000930} \{1,2\},
\seqnum{A079972} \{2,3\}, 
\seqnum{A003269} \{1,2,3\}, 
\seqnum{A351874} \{1,3,4\},
\seqnum{A121832} \{2,3,4\},
\seqnum{A003520} \{1,2,3,4\}, 
\seqnum{A375985} \{1,3,4,5\}, 
\seqnum{A259278} \{2,3,4,5\},
\seqnum{A005708} \{1,2,3,4,5\},
\seqnum{A276106} \{2,3,4,5,6\}, 
\seqnum{A005709} \{1,2,3,4,5,6\},
\seqnum{A322405} \{2,3,4,5,6,7\},
\seqnum{A005710} \{1,2,3,4,5,6,7\},
\seqnum{A368244} \{2,3,4,5,6,7,8\}.

Theorem~\ref{T:per} covers all cases where there are a finite number
of metatiles for $q\leq7$.  The following generalization of the theorem
(in which the $\Qset$-combs are periodic in the first two teeth and
gaps up to the final tooth) extends this to $q\leq11$.

\begin{theorem}\label{T:per2}
If
\begin{align*}
  \Qset&=\{0,1,\ldots,l-1,l+g,l+g+1,\ldots,l+g+m-1,\\&\qquad
  l+g+m+h, l+g+m+h+1, \ldots, 2l+g+m+h-1,\\&\qquad
  2(l+g)+m+h,2(l+g)+m+h+1,\ldots,2(l+g+m)+h-1,\ldots,*\},\\
  * &=\begin{cases}
  \dfrac{t-1}{2}(l+g+m+h),
  \ldots,
  \dfrac{t-1}{2}(l+g+m+h)+r-1=q>0, & \text{t odd};\\
  \dfrac{t}{2}(l+g)+\Bigl(\dfrac{t}{2}-1\Bigr)(m+h), 
  \ldots,
  \dfrac{t}{2}(l+g)+\Bigl(\dfrac{t}{2}-1\Bigr)(m+h)+r-1=q, & \text{t even};
  \end{cases}
\end{align*}
where $g,m,h,t\in\Zset^+$, and, for $t>1$, $r\ge
\frac12(t-1)(l+g+m+h)-1$ if $t$ is odd and $r\ge
\frac12t(l+g)+(\frac12t-1)(m+h)-1$ if $t$ is even, and either
(i)~$l\ge g+m+h$ or (ii)~$l\ge g,h$ and $m\ge l-1+g$, then $S_n^\Qset$
is the number of compositions of $n+q$ into parts 1 and $q+1$ when
$t=1$ and, when $t>1$, parts 1, $q+1$, $q+l+1,\ldots,q+l+g$,
$q+l+g+m+1,\ldots,q+l+g+m+h,\ldots,*$, where
\[
*=\begin{cases}
  q+\dfrac{t-1}{2}(l+g+m)+\dfrac{t-3}{2}h+1,
  \ldots,
  q+\dfrac{t-1}{2}(l+g+m+h), & \text{t odd};\\
  q+\dfrac{t}{2}l+\Bigl(\dfrac{t}{2}-1\Bigr)(g+m+h)+1, 
  \ldots,
  q+\dfrac{t}{2}(l+g)+\Bigl(\dfrac{t}{2}-1\Bigr)(m+h), & \text{t even}.
\end{cases}
\]
\end{theorem}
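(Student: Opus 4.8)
The plan is to follow the same three-step template as the proof of Theorem~\ref{T:per}, now for the doubly-periodic comb. The $\Qset$-comb in the present case is an $(l,g,m,h,r;t)$-comb. When $t=1$ it degenerates to an $r$-omino, so $\Qset=\{0,1,\ldots,r-1=q\}$, there are no two-comb metatiles, and Lemma~\ref{L:k<=2} gives compositions of $n+q$ into the parts $1$ and $q+1$. For $t>1$ it remains to verify two things: that the stated lower bound on $r$ is precisely the finiteness condition $2r\ge q$, and that hypotheses (i) or (ii) force every metatile to contain at most two combs. Lemma~\ref{L:k<=2} then yields the composition statement, the listed parts $1$, $q+1$, and $q+1+p_i$ arising exactly as in Theorem~\ref{T:per} from the cell positions $p_i$ of the empty (gap) cells of the comb.

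For the finiteness step I would set $s=q+1-r$, the distance from cell $0$ to the rightmost tooth, and read it off from the two cases of the definition of $\Qset$: a direct count gives $s=\tfrac12(t-1)(l+g+m+h)$ for $t$ odd and $s=\tfrac{t}{2}(l+g)+(\tfrac{t}{2}-1)(m+h)$ for $t$ even. In both parities the hypothesis on $r$ is exactly $r\ge s-1$, which is the condition $2r\ge q$; by Lemma~2 in \cite{All-Comb} there are then finitely many metatiles. Moreover, since the last gap cell satisfies $p_i\le s-1\le r$, any second comb started in a gap of the first comb has the portion of it lying beyond the first comb entirely contained in its own rightmost tooth, so no trailing gap is created and a third comb can only ever be seeded by an empty cell left \emph{inside} the span of the first comb.

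The heart of the argument, and the step I expect to be the main obstacle, is showing that (i) $l\ge g+m+h$, or (ii) $l\ge g,h$ together with $m\ge l-1+g$, guarantees that when a second comb $C_2$ is started in any gap cell $p_i$ of the first comb $C_1$, every gap of $C_1$ beyond $p_i$ is covered by a tooth of $C_2$ while every gap of $C_2$ lands inside a tooth of $C_1$, so that no empty cell survives within the span of $C_1$ and hence no third comb can be inserted. I would track the cell positions of $C_2$ relative to $C_1$ and reduce to the extremal starting positions, the tightest being when $C_2$ begins at the \emph{last} cell of a gap of $C_1$; it is this off-by-one that produces the sharp bound $m\ge l-1+g$ in case (ii), where $l\ge g$ and $l\ge h$ ensure that each $l$-tooth of $C_2$ bridges the neighbouring $g$- and $h$-gaps of $C_1$ (cf.\ Fig.~\ref{f:twocombmetatiles}(c,d)). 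Case (i) is the complementary regime in which a single $l$-tooth of $C_2$ already spans a full period $g+m+h$ of $C_1$, so that the gaps of $C_2$ skip over two teeth of $C_1$ at once (cf.\ Fig.~\ref{f:twocombmetatiles}(b)); here the inequalities are looser but the same coverage bookkeeping applies.

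The delicate part is organizing this bookkeeping uniformly. I would separate the two parities of $t$, handle the periodic interior by induction on the tooth index of $C_2$, and treat the boundary tooth $r$ of $C_1$, together with the symmetric roles of the two gap lengths $g$ and $h$, as special cases, checking in each that the two hypotheses are exactly what is needed for the coverage to close up without leaving a gap. I expect the interplay between the two coverage regimes, and verifying that condition (ii) with its $-1$ offset is sharp at the last-cell-of-a-gap starting position while condition (i) is sharp at a full-period span, to be where the inequalities must be used most carefully.
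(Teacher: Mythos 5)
Your proposal follows essentially the same route as the paper's proof: identify the $\Qset$-comb as an $(l,g,m,h,r;t)$-comb, dispose of small $t$ via Theorem~\ref{T:per} (the paper actually delegates both $t=1$ and $t=2$ to that proof), show the stated bounds on $r$ are exactly the finiteness condition $2r\ge q$ by computing $s=q+1-r$ for the two parities (your formulas for $s$ agree with the paper's expressions for $q+1$), argue that (i) or (ii) forces at most two combs per metatile, and conclude with Lemma~\ref{L:k<=2}. The one substantive difference is in how the middle step is treated. You defer the coverage bookkeeping to a planned case analysis, whereas the paper does not carry out a general sufficiency verification at all: it justifies (i) and (ii) by exhibiting the three extremal configurations in Fig.~\ref{f:twocombmetatiles}(b--d) and arguing that weakening any inequality by one lets a third comb start at a specific cell (essentially sharpness/necessity arguments, with sufficiency left to the figures). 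Your stated target for the bookkeeping---after placing the second comb at any gap cell, no empty cell survives inside the span, since any surviving empty cell can seed a third comb's leftmost cell---is the correct sufficiency statement, and is in that respect more explicit than the paper. One detail is misattributed, though: you locate the tight case for $m\ge l-1+g$ at a second comb beginning at the \emph{last} cell of a gap, whereas the paper's configuration (Fig.~\ref{f:twocombmetatiles}(d)) places the second comb at the \emph{start} of the first gap and shows that if $m$ were smaller a third comb could start at the start of the second gap; your exhaustive analysis over all starting cells would need to check both, and indeed starting positions strictly inside a gap are precisely where misaligned gap cells of the two combs can coincide and leave an interior empty cell, so this is the case your bookkeeping must not skip.
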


\begin{proof}
Here the $\Qset$-comb is an $(l,g,m,h,r;t)$-comb. The $t=1,2$ cases
are covered by the proof of Theorem~\ref{T:per}. First note that if
$t$ is odd, the length of the comb $q+1=\frac12(t-1)(l+g+m+h)+r$ and
it is $\frac12t(l+g)+(\frac12t-1)(m+h)+r$ if $t$ is even from which
the respective conditions on $r$ (so that there are a finite number of
metatiles) follow. There are two mutually exclusive sets of conditions
that ensure that the metatiles contain no more than two
combs. Condition (i) that $l\ge g+m+h$ (see
Fig.~\ref{f:twocombmetatiles}(b)) arises as a result of the
observation that if $l$ were one less than $g+m+h$ and a second comb
were placed starting at the first gap of the first comb, the start a
third comb could be placed starting at the final cell of the second
gap of the first comb. The first part of condition (ii), namely that
$l\ge g,h$ is for the same reasons as the $l\ge g$ condition in
Theorem~\ref{T:per}. We must also have $l+g+m\geq g+m+h$ (see
Fig.~\ref{f:twocombmetatiles}(c)). This also implies that $l\ge
h$. Were $l+g+m$ one less than $g+m+h$, it can be seen that after
placing the second comb at the start of the first gap, a third comb
could be placed at the end of the second gap of the first comb. At the
same time, we must ensure that $m$ is not too small; otherwise the
start of a third comb could be inserted at the start of the second gap
of the first comb. Thus $l-1+g\leq m$ (see
Fig.~\ref{f:twocombmetatiles}(d)).  The result then follows from
Lemma~\ref{L:k<=2}.
\end{proof}  

\subsection{Compositions with parts drawn from infinite sets}

We now turn to compositions where the parts are drawn from infinite sets.
In the context of tiling this means that there is an infinite number
of possible metatiles and so the metatile-generating digraph
has an inner cycle. We start by showing that the metatile lengths can only
be all different if there is just one inner cycle.

\begin{lemma}\label{L:I1I2}
If the metatile-generating digraph for a tiling contains a common node
and more than one inner cycle then there exist distinct metatiles of
the same length.
\end{lemma}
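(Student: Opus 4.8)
The plan is to produce, from the hypotheses, two different admissible walks from the 0~node to the 0~node of the same total length; since such walks correspond to metatiles in the framework of \cite{EA15,All-Comb}, these will furnish two distinct metatiles of equal length. First I would fix a common node $v$. Because $v$ lies on an inner cycle it is both reachable from the 0~node and able to reach the 0~node, so there is a path $\gamma_1$ from the 0~node to $v$ and a path $\gamma_2$ from $v$ back to the 0~node, each meeting the 0~node only at the appropriate endpoint. Since the digraph has more than one inner cycle and every inner cycle passes through $v$, I can select two \emph{distinct} inner cycles $C_1$ and $C_2$, each written as a closed walk based at $v$.

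Next I would form the two walks $M_1=\gamma_1\,C_1\,C_2\,\gamma_2$ and $M_2=\gamma_1\,C_2\,C_1\,\gamma_2$. Each runs from the 0~node to the 0~node, and its interior avoids the 0~node: the interiors of $\gamma_1$ and $\gamma_2$ do so by construction, while $C_1$ and $C_2$ do so because inner cycles exclude the 0~node. Hence $M_1$ and $M_2$ are metatiles. They moreover traverse exactly the same arcs — those of $\gamma_1$, $C_1$, $C_2$, $\gamma_2$ — and each such arc is traversed from the same tail node in both walks (in either ordering $C_1$ and $C_2$ are each entered at $v$); since the length contributed by an arc (namely $1$ for a square and $q+1-d$ for a comb leaving a $d$-digit node) depends only on its tail, $M_1$ and $M_2$ have equal total length.

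The remaining, and crucial, step is to check that $M_1\neq M_2$, equivalently that the arc sequences $C_1C_2$ and $C_2C_1$ differ. Here I would argue by contradiction. If $\abs{C_1}=\abs{C_2}$, then comparing the first $\abs{C_1}$ arcs of $C_1C_2$ and of $C_2C_1$ would force $C_1=C_2$, contrary to our choice. If instead the lengths differ, say $\abs{C_1}<\abs{C_2}$, then equality of $C_1C_2$ and $C_2C_1$ would make $C_1$ an initial segment of $C_2$; but traversing $C_1$ from $v$ returns to $v$ after only $\abs{C_1}<\abs{C_2}$ arcs, so $C_2$ would revisit $v$ before completing, contradicting that an inner cycle is a simple cycle. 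I expect this last point to be the main obstacle, precisely because it is where the \emph{simplicity} of the inner cycles is indispensable: it is exactly what forbids the degenerate coincidence $C_1C_2=C_2C_1$ and thereby guarantees that $M_1$ and $M_2$ are genuinely different metatiles of the same length.
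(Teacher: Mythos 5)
Your proof is correct and takes essentially the same approach as the paper, whose entire argument is to note that a metatile executing inner cycle $I_1$ followed by $I_2$ at the common node yields a distinct metatile of the same length when the order of traversal is swapped. Your additions—explicitly constructing the connecting paths $\gamma_1,\gamma_2$, noting that arc lengths depend only on the arcs traversed, and using simplicity of the inner cycles to rule out the degenerate coincidence $C_1C_2=C_2C_1$—merely fill in details the paper leaves tacit.
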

\begin{proof}
Suppose the digraph contains inner cycles $I_1$ and $I_2$. If the
sequence of nodes and arcs to create one metatile involves executing
$I_1$ followed by $I_2$ then a distinct metatile of the same length
can be generated by swapping the order in which the inner cycles are
traversed.
\end{proof}  

Four classes of $\Qset$ whose associated digraphs possess common nodes
have been investigated 
\cite[Theorems 5--8]{All-Comb}.
One of these classes (Theorem~8 in All24)
always has at least two inner cycles; the simplest member of this
class is $\Qset=\{1,4\}$ (see Fig.~\ref{f:dg14}). Another class
(Theorem~7 in All24) has just one inner cycle but always has
the metatiles $C_{[q+1]}C_{[l]}^2$ and $C_{[q+1]}S^{a-1}C_{[2l]}S$,
where $a=\abs{\Nset_q\setminus\Qset}$, which are of the same length (see
Fig.~5 in All24). The metatile-generating digraph of one of
the remaining classes has $a$ inner cycles (Theorem~5 in
All24). The $a=1$ case leads to the following result.

\begin{theorem}\label{T:compwb}
If $\Qset=\{1,\ldots,p-1,p+1,\ldots,q\}$, where $2p>q$, then
$S^\Qset_n$ is the number of compositions of $n+q$ into parts 1 and
$q+1+jp$ for $j=0,1,2,\ldots$. 
\end{theorem}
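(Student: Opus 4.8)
The plan is to determine explicitly the metatiles for restricted-overlap tiling with squares and the $\Qset$-comb, to show that their lengths are exactly $1$ and $q+1+jp$ (all distinct), and then to combine the distinct-length composition correspondence with Theorem~\ref{T:S=B}. First I would observe that for $\Qset=\{1,\ldots,p-1,p+1,\ldots,q\}$ the $\Qset$-comb has teeth on cells $0,\ldots,p-1$ and $p+1,\ldots,q$ and a single gap cell $p$ (consistent with $a=\abs{\Nset_q-\Qset}=1$); it is thus a $(p,1,q-p)$-comb of length $q+1$ whose rightmost tooth has length $r=q-p$. Since $2r<q$ is equivalent to $2p>q$, the hypothesis is precisely the condition for there to be infinitely many metatiles, so Lemma~\ref{L:k<=2} does not apply and the metatiles must be found directly.

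The core step is an induction on the number of combs. I would prove that a chain of $i\ge1$ combs, placed with leftmost cells at board positions $0,p,2p,\ldots,(i-1)p$, occupies exactly the cells $\{0,\ldots,ip-1\}\cup\{ip+1,\ldots,(i-1)p+q\}$, leaving a single empty cell at $ip$. The base case $i=1$ is immediate from the shape of the comb. For the inductive step, the restricted-overlap rule forces the next comb to have its leftmost cell on the unique gap at $ip$; its first tooth then covers $ip,\ldots,(i+1)p-1$, and the inequality $(i+1)p-1\ge(i-1)p+q$ (which is just $2p>q$) guarantees that this tooth absorbs the previous right-hand block and merges everything into the contiguous run $0,\ldots,(i+1)p-1$, after which the new gap sits at $(i+1)p$ and the new second tooth occupies $(i+1)p+1,\ldots,ip+q$. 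Because at every stage there is a single gap cell, each successive comb placement is forced and no branching occurs, so the only metatiles are the single square $S$ (length $1$) and, for each $i\ge1$, the chain of $i$ combs completed by filling its final gap with one square. The latter is a genuine metatile precisely because $p\le q$ places the final gap interior to the run, and its length is $(i-1)p+q+1$; writing $j=i-1\ge0$ gives the parts $q+1+jp$.

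It then remains to check distinctness and conclude. The square contributes length $1$, while the comb-chains contribute the arithmetic progression $q+1,\,q+1+p,\,q+1+2p,\ldots$ with common difference $p>0$; since $q\ge1$ forces $q+1+jp\ge2>1$, all metatile lengths are distinct. Consequently every tiling corresponds uniquely to a composition into these distinct parts, so $B^\Qset_m$ equals the number of compositions of $m$ into parts $1$ and $q+1+jp$ ($j\ge0$), and Theorem~\ref{T:S=B} gives $S^\Qset_n=B^\Qset_{n+q}$, which is the claimed count for $n+q$.

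The step I expect to be the main obstacle is the inductive classification of the metatiles: one must argue carefully, using $2p>q$, both that the successive comb placements are forced (a single gap at each stage, hence no choice of position and thus no alternative metatiles) and that the overlap of the incoming first tooth with the existing second tooth leaves exactly one fresh gap advancing by $p$. Everything afterwards---the length bookkeeping and the appeal to the distinct-part composition formula---is routine once this structure is established.
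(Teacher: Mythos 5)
Your proposal is correct and follows essentially the same route as the paper: the paper's proof simply reads off from the digraph of Fig.~\ref{f:dgwba=1} that the metatiles are $S$ and $C_{[q+1]}C_{[p]}^jS$, of distinct lengths $1$ and $q+1+jp$, and then invokes the composition correspondence together with Theorem~\ref{T:S=B}, exactly as you do. Your inductive classification of the comb chains (forced placement at the unique gap advancing by $p$, using $2p>q$) is just an explicit, self-contained verification of what the paper delegates to the digraph, and it is carried out correctly.
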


\begin{figure}
\begin{center}
\includegraphics[width=7cm]{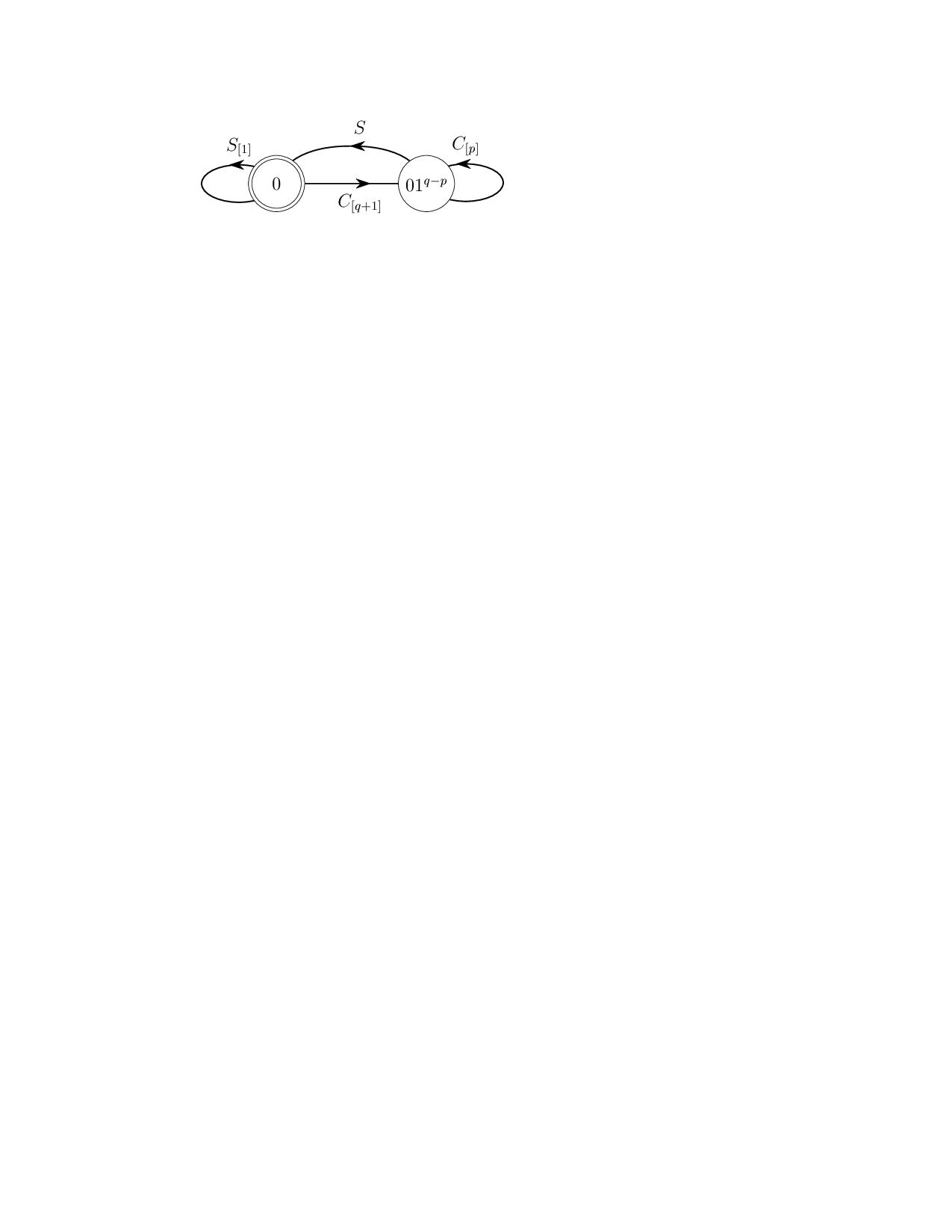}
\end{center}
\caption{Digraph for restricted-overlap tiling with squares ($S$) and
  $(p,1,q-p)$-combs ($C$), where $2p>q$. This corresponds to the
  $\Qset=\{1,\ldots,p-1,p+1,\ldots,q\}$ case.}
\label{f:dgwba=1}
\end{figure}

\begin{proof}
From the digraph (Fig.~\ref{f:dgwba=1}) it can be seen that the
metatiles are $S$ and $C_{[q+1]}C_{[p]}^jS$ for $j=0,1,2,\ldots$. These
are of lengths 1 and $q+1+jp$, respectively.
\end{proof}

OEIS sequences corresponding to a $\Qset$ that Theorem~\ref{T:compwb}
applies to along with the set itself are as follows:
\seqnum{A130137} \{1,3\},
\seqnum{A317669} \{1,2,4\},
\seqnum{A375185} \{1,2,3,5\},
\seqnum{A375186} \{1,2,4,5\}.

The following theorem originates from Theorem~6 in All24 and as it is
a rather general theorem, it employs some extra notation that we now
describe. We let $\theta$ be the bit string representation of $\Qset$
whereby the $j$-th bit from the right in $\theta$ is 1 if and only if
$j\in\Qset$ (we use the same bit string representation of $\Qset$ in
the program given in the Appendix). By $\floor{\theta/2^b}$ we mean
discarding the rightmost $b$ bits in $\theta$ and shifting the
remaining bits to the right $b$ places (we employ the same notation in Theorem~\ref{T:bs}). We use $\mid$ to denote the
bitwise OR operation.

\begin{theorem}\label{T:compmin1arc}
Let $p_i\in\Nset_q\setminus\Qset$ for $i=1,\ldots,a$ and
$p_i<p_{i+1}$.  If $\theta\mid\floor{\theta/2^{p_i-1}}$ for each
$i=1,\ldots,a-1$ is all ones after discarding the leading zeros,
$a\ge2$, and $p_a=q-r$ (which implies that $r\geq1$), then if
(a)~$q=2r+1$ or (b)~$q>2r+1$ and $1\leq p_{a-1}\leq r$, then
$S^\Qset_n$ is the number of compositions of $n+q$ into parts 1,
$q+1$, $q+1+p_i$ for $i=1,\ldots,a-1$, and $q+1+j(q-r)$ for
$j\in\Zset^+$.
\end{theorem}

\begin{figure} 
\begin{center}
\includegraphics[width=11cm]{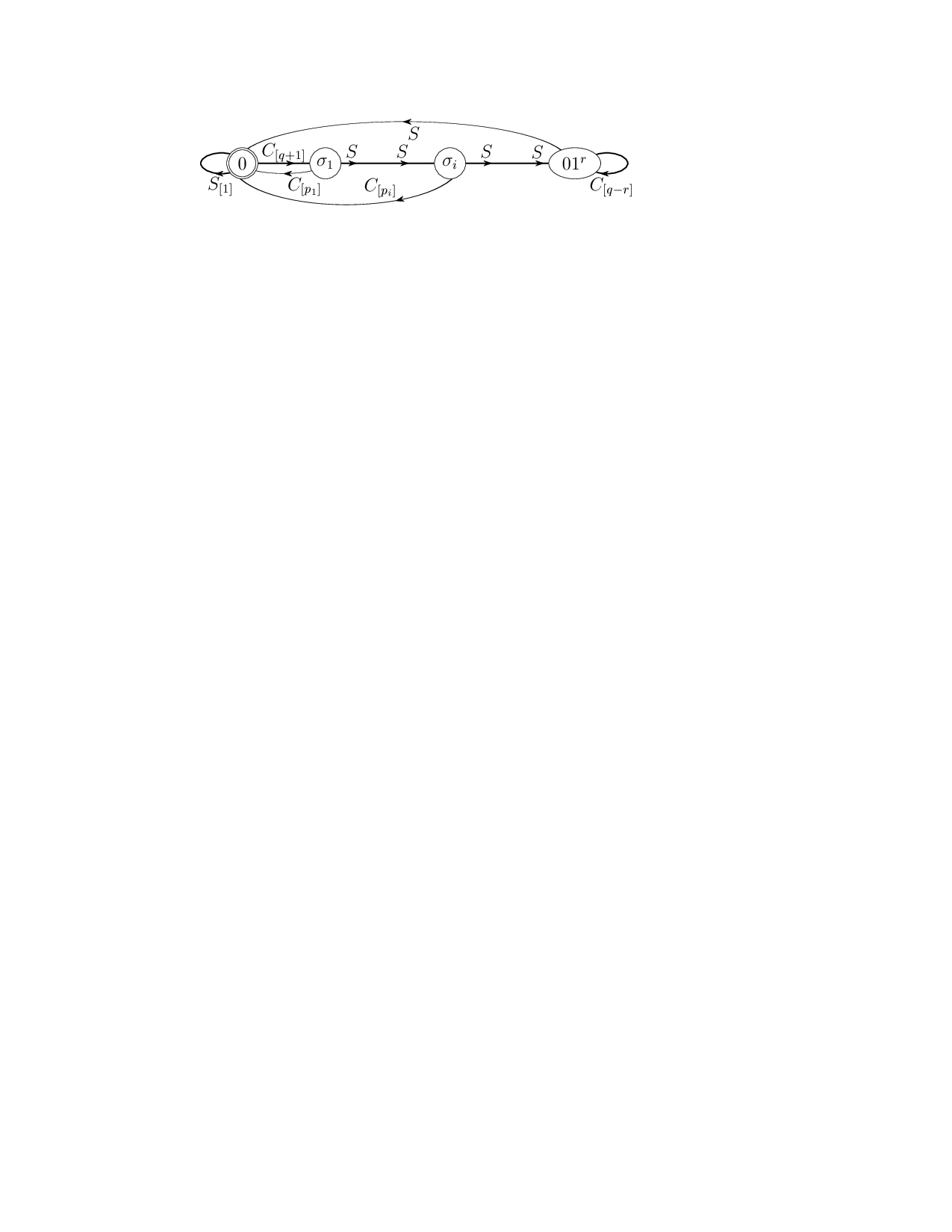}
\end{center}
\caption{Digraph for tiling a board with squares and combs
  corresponding to $\Qset$ specified in
  Theorem~\ref{T:compmin1arc} \cite{All-Comb}. In this figure and the next,
   node label $\sigma_i$ denotes the bit string corresponding to
filling the first $i-1$ empty cells of the $\Qset$-comb with squares and
discarding the leading 1s.}
\label{f:dgmin1arc}
\end{figure}

\begin{proof}
The digraph (Fig.~\ref{f:dgmin1arc}) generates the
metatiles $S$, $C_{[q+1]}S^{a}$,
$C_{[q+1]}S^{i-1}C_{[p_i]}$ for $i=1,\ldots,a-1$, and
$C_{[q+1]}S^{a-1}C_{[q-r]}^jS$ for $j\in\Zset^+$. Their respective
lengths are 1, $q+1$, $q+1+p_i$, and $q+1+j(q-r)$. As
$q-r=p_a>p_{a-1}$, the lengths are all distinct.
\end{proof}

The OEIS sequences to which Theorem~\ref{T:compmin1arc} applies along with the
corresponding $\Qset$ are as follows:
\seqnum{A224809} \{2,4\},
\seqnum{A375981} \{1,4,5\},
\seqnum{A375982} \{2,3,5\},
\seqnum{A375983} \{2,4,5\}.

\section{Connections with bit strings}\label{s:bits}
Others have noted that the number of length-$n$ bit strings (or
\textit{binary words}) having no two 1s that have positions in the
string that differ by $q$ (or, equivalently, are separated from one
another by $q-1$ digits) is $S^{\{q\}}_n$ \cite{KL91b}. This can be
generalized by considering the following bit string $s$ representing
subset $\Sset$: the $j$-th bit from the right in $s$ is 1 if and only
if $j\in\Sset$. (This representation is used in the program in the
Appendix.) Then it is clear that
$S^\Qset_{n,k}$ is the number of length-$n$
bit strings that contain $k$ 1s placed in such a way that the
difference in positions of any two 1s does not equal an element of
$\Qset$. We now mention an instance of this that corresponds to
\seqnum{A130137} in the OEIS \cite{Slo-OEIS}. If one instead places
the same restrictions on 0s rather than 1s, for the $\Qset=\{1\}$
case, the bit strings (i.e., those containing no 00) are called
\textit{Fibonacci binary words}, so named as the number of such words
of length $n$ is $F_{n+2}$. \seqnum{A130137} is described as the
number of length-$n$ Fibonacci binary words that do not contain the
substring 0110. This is $S^{\{1,3\}}_n$ since the three other possible
disallowed substrings that only prevent the occurrence of two 0s whose
positions differ by 3 (i.e., 0000, 0010, and 0100) do not need to be
mentioned; they each contain 00 and so cannot be present in Fibonacci
binary words.

\subsection{Binary word equivalence classes}

The final bijection we present concerns equivalence classes for the
occurrence of a subword within binary words. We say that two binary
words of the same length $n$ are \textit{equivalent with respect to a
  subword $\omega$} if $\omega$ occurs in the same position(s) in
those words. One can represent the equivalence class by the set of
subword positions, counting the leftmost position as 1. For example,
all binary words of the form $xx10010010xx$, where each $x$ can be 0
or 1, belong to the same equivalence class with respect to the subword
$10010$ and this class is represented by the set $\{3,6\}$.

\begin{theorem}\label{T:bs}
Let $\omega$ be a length-$l$ binary subword. We construct set $\Qset$
as follows: $j\in\Qset$ iff $\floor{\omega/2^j}\neq\omega\bmod2^{l-j}$
for $j=1,\ldots,l-1$.  If $q$ is the largest element of $\Qset$ (and
taking $q$ as zero when $\Qset=\{\}$) and $q=l-1$, then the number of
equivalence classes of binary words of length $n$ with respect to
subword $\omega$ is $S^\Qset_{n-q}$. Furthermore,
$S^\Qset_{n-q,k}$ is the number of such equivalence classes whose
representation as a set is of size $k$.
\end{theorem}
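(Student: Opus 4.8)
The plan is to establish a bijection between equivalence classes (with respect to $\omega$) of length-$n$ binary words and the $k$-subsets of $\Nset_{n-q}$ avoiding differences in $\Qset$, where an equivalence class represented by the set $P$ of occurrence positions corresponds to a subset of size $k=\abs{P}$. First I would clarify the combinatorial content of the set $\Qset$ defined from $\omega$. The condition $\floor{\omega/2^j}\neq\omega\bmod2^{l-j}$ says precisely that $\omega$ does \emph{not} self-overlap when shifted by $j$ positions: the length-$(l-j)$ suffix of $\omega$ disagrees with its length-$(l-j)$ prefix. Hence $j\in\Qset$ iff two occurrences of $\omega$ cannot have their starting positions differ by $j$ (because overlapping copies at offset $j$ would force contradictory bits). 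The hypothesis $q=l-1$ means the two extreme characters of $\omega$ differ, guaranteeing that $l-1\in\Qset$, i.e.\ adjacent-by-$(l-1)$ occurrences are forbidden; this is the structural fact that makes the correspondence clean.

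Next I would set up the forward map. Given an equivalence class, its set $P\subseteq\{1,\ldots,n-l+1\}=\{1,\ldots,n-q\}$ of occurrence positions has the property that no two elements of $P$ differ by an element of $\Qset$: if $p,p'\in P$ with $p'-p=j$ and $j\in\Qset$, the two copies of $\omega$ would overlap incompatibly, contradicting that both occur. So every valid class yields a legitimate $\Qset$-avoiding subset of $\Nset_{n-q}$, of size $k$ when the class has $k$ occurrences. The key step is surjectivity/reversibility: I must show that \emph{every} $\Qset$-avoiding subset $P\subseteq\Nset_{n-q}$ arises from some binary word, and in fact from a unique equivalence class. Given such a $P$, I would construct a word by writing $\omega$ at each position in $P$ and argue that (a) these placements are mutually consistent wherever they overlap, and (b) the unconstrained cells can be filled arbitrarily \emph{but} filling them must never accidentally create an occurrence of $\omega$ outside $P$. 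Point (a) follows from the very definition of $\Qset$: overlaps occur only at offsets $j\notin\Qset$, and $j\notin\Qset$ means $\omega$ self-overlaps consistently at shift $j$, so the overlapping bit demands agree. The equivalence class is then exactly the set of all words extending these forced cells in a way that introduces no extra occurrence, and it is well-defined and nonempty.

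The main obstacle I anticipate is point (b): proving that the forced partial word can always be completed to a \emph{full} word whose occurrence set is \emph{exactly} $P$ and not a proper superset. The danger is that the free cells, however chosen, might be forced into an unintended occurrence of $\omega$, or conversely that no completion avoids spurious occurrences; either would break the bijection. Here the hypothesis $q=l-1$ is essential. I would argue that because the forbidden differences are exactly the self-non-overlap offsets, one can fill free cells greedily from left to right, and any position not in $P$ can be made to differ from $\omega$ in at least one cell that is free at the time it is processed—using the fact that a position adjacent (within distance $q=l-1$) to a genuine occurrence is already blocked by the $\Qset$-avoidance of $P$, while a position far from all occurrences has enough free cells to be broken. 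Establishing that this greedy completion always succeeds, and that distinct equivalence classes give distinct sets $P$ (injectivity, which is immediate since a class determines its occurrence set), completes the bijection. Finally, the size statement $S^\Qset_{n-q,k}$ follows because the bijection is size-preserving: $k$ occurrences correspond to a $k$-subset, and summing over $k$ gives $S^\Qset_{n-q}$ for the total count.
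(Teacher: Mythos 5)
Your overall architecture coincides with the paper's proof: you read $j\in\Qset$ as the offsets at which $\omega$ fails to self-overlap, send an equivalence class to its set of occurrence positions (a $\Qset$-avoiding subset of $\Nset_{n-q}$), and invert by overlaying copies of $\omega$ at the positions of a restricted subset, with mutual consistency of overlapping copies holding precisely because offsets between subset elements avoid $\Qset$. Where the paper disposes of your step (b) in a single sentence (``we have constructed a valid equivalence class once we have specified the restrictions on the remaining $x$s to avoid any further appearances of $\omega$''), you correctly single it out as the crux: one must show the forced cells can be completed so that the occurrence set is \emph{exactly} $P$ and not a proper superset.

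However, your greedy resolution of (b) fails, and the gap is genuine. Your dichotomy---a position near a genuine occurrence is ``blocked by the $\Qset$-avoidance of $P$,'' a far position has a free cell to break---omits the bridging case: a window at $p\notin P$ can be \emph{entirely} covered by cells forced by two placed copies at $s_1<p<s_2$ with $j_1=p-s_1\notin\Qset$, $j_2=s_2-p\notin\Qset$, and $j_1+j_2=s_2-s_1\le l$. Writing out the prefix--suffix agreement identities for the permitted shifts $j_1$ and $j_2$ shows that these forced cells spell $\omega$ at $p$, so a spurious occurrence is unavoidable and no word realizes exactly $P$; note that $\Qset$-avoidance of $P$ never rules this out, since it constrains only differences between elements of $P$, not the offsets $j_1,j_2\notin\Qset$. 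Concretely, take $\omega=1010$ (one of the paper's own listed subwords, with $\Qset=\{1,3\}$, $q=3=l-1$), $n=8$, and $P=\{1,5\}$: since $5-1=4\notin\Qset$, $P$ is a legitimate restricted subset, yet the only word with occurrences at positions $1$ and $5$ is $10101010$, which also has an occurrence at position $3$, so the class that would be represented by $\{1,5\}$ is empty; one finds $10$ equivalence classes for $n=8$ while $S^{\{1,3\}}_5=11$. Your greedy filling does succeed when every two (not necessarily distinct) elements of $\Nset_q-\Qset$ sum to more than $l$ (e.g., $\omega=10010$, whose only non-element is $3$), but neither your proposal nor the statement imposes such a hypothesis, so step (b)---which, to be fair, the paper's own proof passes over without justification---cannot be established in the stated generality, and no completion argument, greedy or otherwise, can repair it without an added assumption of this kind.
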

\begin{proof}
We show that the $k$-subsets of $\Nset_{n-q}$ satisfying the
conditions for disallowed differences specified by $\Qset$ are the
same as the sets representing the equivalence classes of binary words
of length $n$ in which $\omega$ appears $k$ times.  In general, the
possible positions of any subword of length $l$ in a word of length
$n$ (and therefore the possible elements of the set representing an
equivalence class) are $1,\ldots,n-l+1=n-q$.  From a restricted subset
$\Sset$ of $\Nset_{n-q}$ we construct an equivalence class of words
as follows. Starting with a length-$n$ word of $x$s (each $x$
representing 0 or 1), for each $s\in\Sset$ we place the start of
subword $\omega$ at position $s$ in the word.  Note that
$\floor{\omega/2^j}$ gives the $l-j$ leftmost bits of $\omega$ and
$\omega\bmod2^{l-j}$ gives the $l-j$ rightmost bits.  If the two are
equal (and so $j\notin\Qset$) then, on shifting the digits of subword
$\omega$ by $j$ places to the right and overlaying them on the
original subword, none of the digits of the original subword are
changed.  This means that we have constructed a valid equivalence
class once we have specified the restrictions on the remaining $x$s to
avoid any further appearances of $\omega$. Note that if there is an
$\omega$ at position $i$ and no other at $i+j$ for all
$j=1,\ldots,q$, we may place another $\omega$ at
position $i+l$ or after this. This coincides with the fact that
$l=q+1$ or any larger integers are not contained in the set $\Qset$ of
disallowed shifts.  The reversible nature of the construction
establishes the bijection.
\end{proof}

In the OEIS, the following sequences are connected with
Theorem~\ref{T:bs} (the corresponding $\Qset$ and the subword
starting with a 1 with the lowest numerical value it applies to
are also given):
\seqnum{A000079} ($\{\}$, 1),
\seqnum{A000045} ($\{1\}$, 10),
\seqnum{A000930} ($\{1,2\}$, 100),
\seqnum{A003269} ($\{1,2,3\}$, 1000),
\seqnum{A130137} ($\{1,3\}$, 1010),
\seqnum{A003520} ($\{1,2,3,4\}$, 10000),
\seqnum{A317669} ($\{1,2,4\}$, 10010; it was via this OEIS sequence that
the connection to restricted combinations was made),
\seqnum{A005708} ($\{1,2,3,4,5\}$, 100000),
\seqnum{A375185} ($\{1,2,3,5\}$, 100010),
\seqnum{A375186} ($\{1,2,4,5\}$, 100110),
\seqnum{A177485} ($\{1,3,5\}$, 101010).
Note that if the bits of subword $\omega$ are flipped (i.e., 0 changed to 1
and vice versa) and/or reversed, the corresponding $\Qset$ remains unchanged.

In the rest of this section we show that the elements of $\Qset$
concerned with the bijection given in Theorem~\ref{T:bs} are always a
well-based sequence and hence the generating function for the number
of equivalence classes and the recursion relation for the number of
equivalence classes whose set representation is of size $k$ are
straightforward to obtain.  The sequence $q_1,q_2,\ldots,q_m$, where
$q_i<q_{i+1}$ for all $i$, is said to be a \textit{well-based sequence}
if $q_1=1$ and for all 2-partitions of $q_j$, for $j=2,\ldots,q_m$,
at least one of the two parts is a $q_i$
\cite{Kit06,Val11}. Thus, for example, $1,3,4$ is not a well-based
sequence since $4=2+2$ and 2 is not in the sequence.
In the
present context, we can regard $q_i$ as being the elements of $\Qset$
with $q_m=q$, its largest element.  In the proof of
Theorem~\ref{T:wbbs}, we use the following definition, which is
equivalent aside from also classing the empty set as being well based:
the sequence is well based if $a=\abs{\Nset_q\setminus\Qset}$ is zero or
if, for all $i,j=1,\ldots,a$ (where $i$ and $j$ can be equal),
$p_i+p_j\notin\Qset$, where the $p_i$ are the elements of
$\Nset_q\setminus\Qset$.  E.g., the only well-based sequences of length
3 are the elements of the sets $\{1,2,3\}$, $\{1,2,4\}$, $\{1,2,5\}$,
and $\{1,3,5\}$.

\begin{theorem}\label{T:wbbs}
If $\Qset$ is constructed from subword $\omega$ as described in
Theorem~\ref{T:bs}, then the elements of $\Qset$ are a well-based
sequence with the additional condition that $\Lambda\notin\Qset$ where
$\Lambda$ is any linear combination (with integer coefficients) of the
$p_i$.
\end{theorem}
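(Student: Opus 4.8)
The plan is to recast the defining condition for $\Qset$ in terms of the \emph{periods} of $\omega$ and then to show that these periods behave additively, from which both the well-based property and its stated strengthening follow at once.

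First I would reinterpret membership in $\Qset$. Writing $\omega=\sum_{i=0}^{l-1}b_i2^i$, the quantity $\floor{\omega/2^j}$ is the integer whose bits are $b_{l-1},\dots,b_j$ (the $l-j$ leftmost bits of $\omega$), while $\omega\bmod2^{l-j}$ is the integer whose bits are $b_{l-j-1},\dots,b_0$ (the $l-j$ rightmost bits). These two integers are equal precisely when $b_{i+j}=b_i$ for every $i=0,\dots,l-1-j$, i.e.\ precisely when $\omega$ has period $j$. Hence, for $1\le j\le l-1$, we have $j\notin\Qset$ iff $\omega$ has period $j$, and consequently the set $\{p_1,\dots,p_a\}=\Nset_q\setminus\Qset$ is exactly the set of periods of $\omega$ lying in $\{1,\dots,q\}=\{1,\dots,l-1\}$.

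The key step is an additive-closure lemma: if $\omega$ has periods $p$ and $p'$ with $p+p'\le l-1$, then $\omega$ has period $p+p'$. This is a short index chase. For $0\le i\le l-1-(p+p')$ one has $i\le l-1-p'$ and $i+p'\le l-1-p$, so applying the period-$p'$ relation at index $i$ and then the period-$p$ relation at index $i+p'$ gives $b_i=b_{i+p'}=b_{i+p'+p}$, which is the period-$(p+p')$ relation. Iterating this lemma by induction on $\sum_i c_i$ shows that every nonnegative integer combination $\Lambda=\sum_i c_i p_i$ with $\Lambda\le q=l-1$ is again a period of $\omega$, hence $\Lambda\notin\Qset$ by the reinterpretation above; and if $\Lambda>q$ then $\Lambda\notin\Qset$ trivially since $\Qset\subseteq\{1,\dots,q\}$. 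Taking $\Lambda=p_i+p_j$ recovers exactly the well-based condition $p_i+p_j\notin\Qset$, so $\Qset$ is well based, and the general statement is the case of an arbitrary combination.

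The main obstacle is really only bookkeeping: matching the right-to-left bit indexing used in the $\floor{\cdot/2^j}$ and $\bmod$ definition with the left-to-right position convention for $\Qset$, and getting the index ranges in the closure lemma exactly right (the hypothesis $q=l-1$ inherited from Theorem~\ref{T:bs} is what makes the range $\Lambda\le q$ coincide with the admissible range $\Lambda\le l-1$ of the lemma). If one wishes to read ``integer coefficients'' as permitting negative coefficients, the only new cases are combinations that still land in $(0,q]$ after cancellation; these reduce to the nonnegative case once one notes that a difference of periods forced to lie in this range is itself a period by the Fine--Wilf periodicity theorem, while any $\Lambda\le0$ or $\Lambda>q$ is again excluded from $\Qset$ for free.
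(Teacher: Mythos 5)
Your reduction of the construction to periods of $\omega$, your additive-closure lemma, and the induction giving $\Lambda\notin\Qset$ for all \emph{nonnegative} integer combinations are correct, and this part is essentially identical to the paper's own argument: the paper proves $p_i+p_j\notin\Qset$ by exactly your index chase ($b_n=b_{n-p_i}=b_{n-p_i-p_j}$), and your induction is a careful version of its ``in an analogous way'' remark. For nonnegative coefficients all intermediate indices stay in $[1,l]$, so that half is sound.

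The genuine gap is your last step, the appeal to Fine--Wilf for negative coefficients. The Fine--Wilf theorem only yields a period $\gcd(p,p')$ when the word is long enough, namely $l\ge p+p'-\gcd(p,p')$, and nothing in the hypotheses (including $q=l-1$) supplies this; without it, a difference of periods lying in $(0,q]$ need \emph{not} be a period. Concretely, take $\omega=101011010$ ($l=9$). Its periods among $1,\ldots,8$ are exactly $5$ and $7$ (and $5+7-\gcd(5,7)=11>9$, so Fine--Wilf is silent), giving $\Qset=\{1,2,3,4,6,8\}$ with $q=8=l-1$ and $p_1=5$, $p_2=7$. Then $\Lambda=p_2-p_1=2\in\Qset$, so the asserted step fails --- and in fact this shows the theorem's ``integer coefficients'' claim is itself false for genuinely mixed-sign combinations, while the well-based (nonnegative) part survives, since $p_i+p_j\ge 10>q$ here. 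You are in good company: the paper's own proof has the same defect, as its chaining argument $b_n=b_{n-\Lambda}$ silently requires intermediate indices to remain in $[1,l]$, which holds only when all coefficients have the same sign (for $\omega$ above, $b_n=b_{n-2}$ fails at $n=6$). The paper's illustrative applications ($\Qset=\{1,2,5\}$ with $p_1=3,p_2=4$, $l=6$, and $\Qset=\{1,2,3,5,7\}$ with $p_1=4,p_2=6$, $l=8$) happen to satisfy the Fine--Wilf length bound $l\ge p_1+p_2-\gcd(p_1,p_2)$, which is why they are nonetheless correct; any repaired statement of the difference property would need such a length hypothesis made explicit.
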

\begin{proof}
Suppose $\omega$ is the length-$l$ bit string $b_lb_{l-1}\cdots
b_2b_1$.  Then $p_i,p_j\notin\Qset$ iff $b_n=b_{n-p_i}$ and
$b_n=b_{n-p_j}$ for $n\leq l$ with $n\geq p_i+1$ and $n\geq p_j+1$,
respectively. Hence we also have $b_n=b_{n-p_i-p_j}$ for
$1+p_i+p_j\leq n\leq l$. This implies that $p_i+p_j\notin\Qset$ and
hence the elements of $\Qset$ form a well-based sequence.
In an analogous way, it is easily seen that
we have $b_n=b_{n-\Lambda}$ where $\Lambda$ is any integer-coefficient
linear combination of the $p_i$ such that $0<\Lambda<n$
and hence $\Lambda\notin\Qset$.
\end{proof}
The additional property in Theorem~\ref{T:wbbs} means, for example,
that the sets $\{1,2,5\}$ (for which $p_1=3$, $p_2=4$) and
$\{1,2,3,5,7\}$ (for which $p_1=4$, $p_2=6$) do not correspond to any
$\omega$ (since in both cases $p_2-p_1\in\Qset$) although their
elements form well-based sequences.

The following corollary gives a generating function for 
$E^{(\omega)}_{n,k}$, the number of equivalence
classes of length-$n$ binary words with respect to a length-$l$
subword $\omega$ that have a set representation of size $k$.

\begin{corollary}\label{C:wbbs}
The generating function $\tilde{g}(x,y)$, whose coefficient of
$x^ny^k$ equals $E^{(\omega)}_{n,k}$, is given by
\begin{equation}\label{e:wbbs} 
\tilde{g}(x,y)=\frac{1-\bar{c}}{(1-x)c-xy},
\end{equation}
where $\bar{c}=\sum_{i=1}^ax^{p_i}y$, $a=\abs{\Nset_q\setminus\Qset}$,
$c=1+\sum_{i=1}^{\abs{\Qset}}x^{q_i}y$, the set
$\Qset$ is obtained from $\omega$ using the procedure in
Theorem~\ref{T:bs}, $q_i$ are the elements of $\Qset$ of which the
largest is $q$ (with $q=0$ when $\Qset=\{\}$), and $p_i$ are the
elements of $\Nset_q\setminus\Qset$, provided that $q=l-1$.
\end{corollary}

\begin{figure}[b] 
\begin{center}
\includegraphics[width=10.5cm]{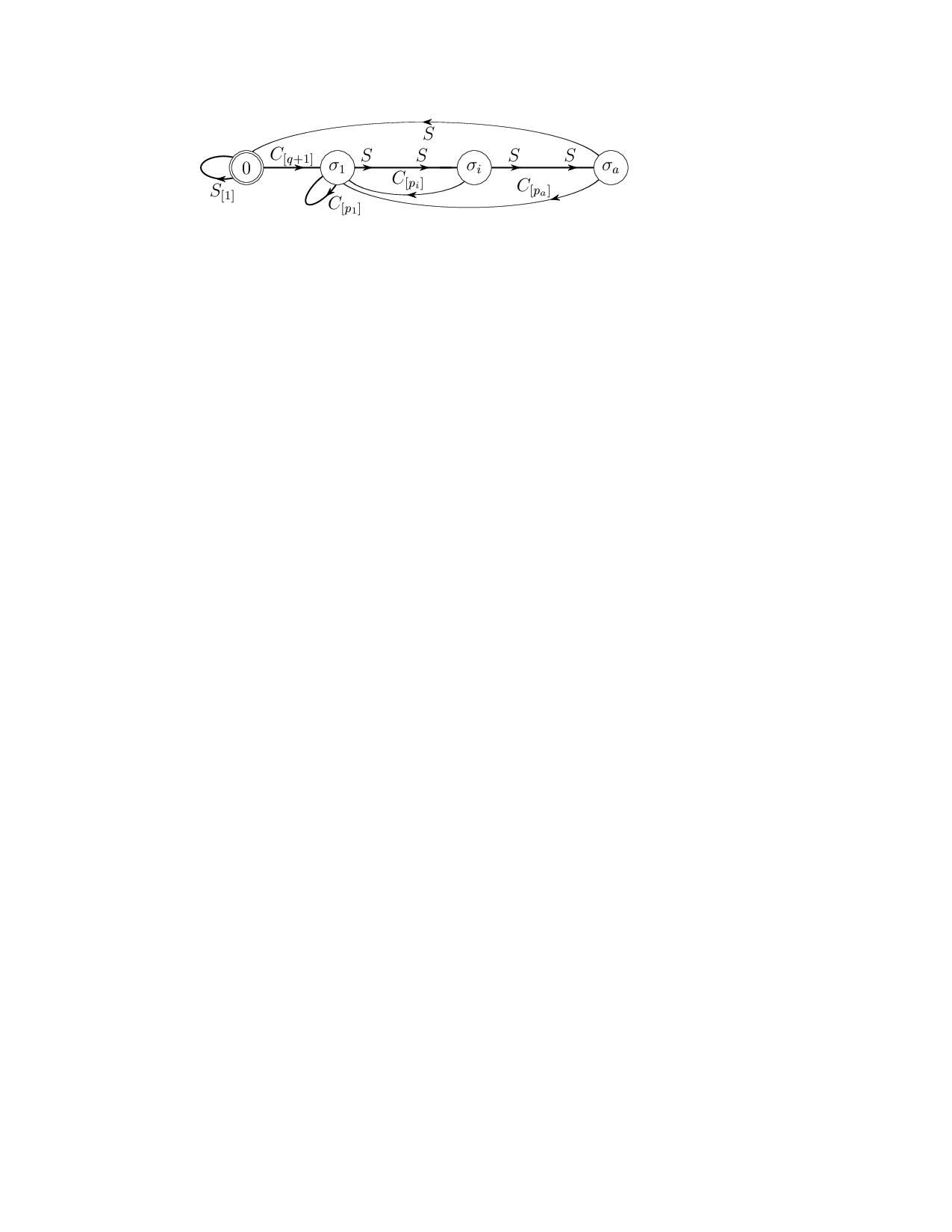}
\end{center}
\caption{Digraph for tiling a board with squares and $\Qset$-combs
  when the $q_i\in\Qset$ are a well-based sequence \cite{All-Comb}.}
\label{f:wb}
\end{figure}

\begin{proof}
We first deal with the $q=0$ case. Then $\Qset=\{\}$ and so
$\bar{c}=0$ and $c=1$.
This leaves $\tilde{g}(x,y)=1/(1-x-xy)$. In
this case, each binary word is in an equivalence class of its own and
so $E^{(\omega)}_{n,k}=\tbinom{n}{k}$ since this is the number of
length-$n$ binary words that contain $k$ 1s. The generating function
for $\tbinom{n}{k}$ is $1/(1-x-xy)$.  For $q>0$, since
$E^{(\omega)}_n=E^{(\omega)}_{n,0}=1$ for $0\leq n\leq q-1$, and, by
Theorem~\ref{T:bs}, $E^{(\omega)}_{n,k}=S^\Qset_{n-q,k}$ for $n\geq q$
and from Theorem~\ref{T:S=B} $S^\Qset_{n-q,k}=B^\Qset_{n,k}$ for
$n\geq q$, overall we have $E^{(\omega)}_{n,k}=B^\Qset_{n,k}$. Hence
$\tilde{g}(x,y)=G(x,y)$, the generating function such that the
coefficient of $x^ny^k$ is the number of tilings of an $n$ board that
contain $k$ combs.  The digraph for tiling a board with squares and
$\Qset$-combs when the elements of $\Qset$ are a well-based sequence
is shown in Fig.~\ref{f:wb} \cite{All-Comb}. The digraph has a common
node ($\sigma_1$) and the inner cycles ($S^{i-1}C_{[p_i]}$ for
$i=1,\ldots,a$) have lengths $p_i$. There is a single common circuit
($C_{[q+1]}S^a$), which is of length $q+1$, and one outer cycle
($S_{[1]}$), which is of length 1. The common circuit and all the
inner cycles contain a single comb. From Theorem~\ref{T:cn} we then
have
\begin{equation}\label{e:wbbgf}
G(x,y)=\frac{1-\bar{c}}{1-\bar{c}-x+x\bar{c}-x^{q+1}y}.
\end{equation}
Using the result that
$c+\bar{c}=1+\sum_{i=1}^qx^iy=1+(x-x^{q+1})y/(1-x)$ to simplify the
denominator gives \eqref{e:wbbs}.
\end{proof}

The following corollary, which is a generalization of a result
obtained previously \cite[Theorem~2]{Kit06} (and also Corollary~1 in
All24), is then easily obtained from \eqref{e:wbbgf} by applying
Corollary~\ref{C:snkgf}.

\begin{corollary}\label{C:wbgf}
If the elements $q_i$ of $\Qset$ are a well-based sequence then the
generating function such that the $x^ny^k$ coefficient thereof is
$S^\Qset_{n,k}$ is given by
\begin{equation}\label{e:wbgf} 
g(x,y)=\frac{c}{(1-x)c-xy},
\end{equation}
where $c=1+\sum_{i=1}^{\abs{\Qset}}x^{q_i}y$.
\end{corollary}

\section{Discussion}\label{s:dis}

Theorem~\ref{T:pcn}, which we derived here in order to obtain the
generating function for $S^{\{1,5\}}_n$, will be used elsewhere to
obtain expressions for generating functions for $S^\Qset_{n,k}$ in
terms of the elements of $\Qset$ for various other classes of
$\Qset$. As $\abs{\Nset_q\setminus\Qset}$ increases, the digraphs
become too complicated for the application of general results
analogous to Theorem~\ref{T:pcn}. In such instances, the repeated use
of Lemmas \ref{L:par}, \ref{L:ser}, and \ref{L:loop} yields the
generating function, as we demonstrate in \S\ref{s:srp1} for the
$\Qset=\{1,6\}$ case, although this becomes laborious for larger
digraphs. Then a more direct approach is to employ a transfer matrix
technique. For a digraph with $v$ nodes, the basic method gives the
following expression for the generating function \cite{Sta=11}:
\begin{equation}\label{e:tmgf}
  G(x,y)=\frac{\det (\mat{I}_{v-1}-\mat{\check{T}})}{\det (\mat{I}_v-\mat{T})},
\end{equation}
where $\mat{I}_m$ is the $m\times m$ identity matrix and the
$(i,j)$-th entry of transfer matrix $\mat{T}$ is the generating
function for the direct connection from the $j$-th node to the $i$-th
node. For convenience, one insists that the 0 node is the first
node. Then $\mat{\check{T}}$ is obtained from $\mat{T}$ by removing
the first column and first row. For example, choosing the $0^21$ and
$01$ nodes to be, respectively, the second and third nodes, the
transfer matrix for the digraph in Figure~\ref{f:dg14} is
\[
\mat{T}=\begin{pmatrix}x&0&1\\x^5y&0&x^3y\\0&1+x^2y&0\end{pmatrix},
\]
and \eqref{e:tmgf} gives
$G(x,y)=(1-x^3y-x^5y^2)/(1-x-x^3y+x^4y-x^5(y+y^2)+x^6y^2-x^7y^2)$. As
expected, this reduces to the first term in parentheses in
\eqref{e:g14w} on replacing $y$ by 1. More sophisticated techniques
involve first reducing the dimensions of the transfer matrices used in
an analogous formula to \eqref{e:tmgf} \cite{Klo09}.

Given the simple form of \eqref{e:tmgf}, one might ask why we have
gone to the trouble of introducing a lot of terminology concerning
digraphs and deriving Theorems~\ref{T:cn} and \ref{T:pcn}. There are
two reasons. The first is that the determinant of the transfer matrix
of a general digraph with an unspecified number of nodes (such as
those shown in Figs.~\ref{f:dgmin1arc} and \ref{f:wb} in this article
or Figs.~5 and 6 in All24)
is not trivial to simplify to give the
generating function; it is much easier to read off the lengths of the
cycles and circuits in the digraph and plug them into the formula for
$G(x,y)$. Once we have a general expression for $G(x,y)$ for a class
of $\Qset$, there is no longer any need to draw any digraphs (and deal
with transfer matrices) for an instance of that class.
Second, having concepts such as inner and outer cycles and common
nodes can lead to quick proofs, as is the case for Lemma~\ref{L:I1I2}
in the present work and the main theorem in a forthcoming article
\cite{AE25}.

\section{Acknowledgments}
The author thanks the two anonymous referees for their
time and the detailed and helpful comments. He also thanks Alex S\'aiz
for testing the program on his 64-bit machine.

\section*{Appendix: C program for finding $S^\Qset_n$ and $S^\Qset_{n,k}$}
There are two main features that contribute to the efficiency of the
algorithm used in the program listed below. Rather than directly
computing whether the difference of each pair $x,y$ of elements of
subset $\Sset$ is in $\Qset$, we perform a bitwise AND
operation on bit string representations of $\Sset$ shifted $x$
places to the right and $\Qset$. If the AND operation gives an answer of 0
then $y-x$ does not equal any element of $\Qset$ for all $y>x$.
We also exploit the result that if $\Sset$ is an allowed subset
of $\Nset_n$ then it is an allowed subset of $\Nset_m$ for
any $m>n$.

\lstset{language=C,
  frame=none,
  keepspaces=true,
  columns=fullflexible,
  basicstyle=\footnotesize\ttfamily,
  commentstyle=\color{webbrown},
  showstringspaces=false}
\begin{lstlisting}
// Anything after // on the same line is a comment.
// Program name: rcl.c  (restricted combinations on a line).
// Purpose: counts subsets of {1,2,...,n} such that 
// no two elements have a difference equal to an element of Q
// and finds S_n or S_{n,k} for n from 0 to MAXn, which is typically 32 or 64.
// To create an executable called rcl using the GNU C compiler enter:
// gcc rcl.c -O2 -o rcl  

// include libraries
#include <stdio.h>
#include <stdlib.h>
#include <string.h>
// sizeof( ) returns the number of bytes used by a variable type
// the variable type unsigned long int typically uses 4 bytes
// on 32-bit machines and 8 bytes on 64-bit machines
// hence MAXn is typically 32 or 64, respectively
#define MAXn 8*sizeof(unsigned long int)

// after including the libraries and defining MAXn, program starts here
// argc equals the number of command-line arguments plus 1
// argv is an array of character strings containing those arguments 
int main(int argc,char **argv) {
// start by defining the variables used by the program
// j-th bit from the right in the bit string is 1 iff j is in the set it represents
  unsigned long int Q=0,s,ss; // bit strings giving Q, subset, and shifted subset
  char *p; // used for reading in Q (given as a comma-separated list) from command line
  unsigned short int tri, // true if S_{n,k} triangle rather than S_n is wanted
    i,j,k=0,test,lastn=0;
  unsigned long int S[MAXn+1][MAXn+2]; // S[n][k]=S_{n,k} if tri true; if not, S[n][0]=S_n
// end of definitions of variables
// give examples of usage if program run with no arguments  
  if (argc==1) { // rcl called with no arguments
// puts( ) displays a string; \t is tab \n is new line 
    puts("example usage:\t rcl 1,2,4\n\t\t rcl 1,2,4 t");
    return 1; // end the program
  } // end if
// p starts by pointing to the character before the start of the 1st argument
  for (p=argv[1]-1;p;p=strchr(p,',')) { // loop to read in Q from command line argument
// line(s) within these braces only executed if p is not zero
// ++p means add 1 to p first
// atoi( ) converts a string into an integer, stopping as soon as it reaches a non-digit 
// x<<m means shift x in binary m places to the left
// | is the bitwise OR operation
    Q=Q|1<<(atoi(++p)-1); // add to existing Q
  } // end of loop: p is updated to point to the next comma in argument or zeroed if none
// && means a logical AND
  tri=(argc>2 && argv[2][0]=='t'); // true if argument 2 in command line is t
// initialize array that will contain S_{n,k}
// S_{n,0} is set to 1 in order to count the empty set
  for (i=0;i<=MAXn;i++) {
// cond?x:y returns x if cond is true and y otherwise
    for (j=0;j<=(tri?i+1:1);j++) {
      S[i][j]=j?0:1; // set k=0 totals to 1, zero k>0 totals
    }
  }
// ~0 means all ones in binary 
  for (s=1;s<~0;s++) { // loop to test all possible nonempty subsets s
    if (tri) {
// if S_{n,k} wanted, get k = number of set bits of s
      for (k=0,ss=s;ss;ss>>=1) {
        if (ss&1) k=k+1; // & is the bitwise AND operation 
      }          
    }
    for (ss=s,i=1;i<=MAXn;i++) { // test subset s
      test=ss&1; // will only test shifted subset against Q if rightmost bit of ss is 1
      ss>>=1; // shift shifted subset ss to the right by 1 
      if (ss==0) { // s is ok (no disallowed differences found)
        // i is now minimum n for which s is ok
        if (i>lastn) { // total(s) for previous n finished
          for (j=0;S[lastn][j];j++) {
            printf("%ld,",S[lastn][j]); // display S_n or S_{n,k} if nonzero
          }
          if (tri) puts(""); // new line 
          else fflush(stdout); // display immediately
          lastn=i; // update lastn for next time it is used
        }
        for (j=i;j<=MAXn;j++) {
          S[j][k]++; // if s is ok for a given n, it is ok for all higher n
        }
        break; // done with current s
      }
      if (test && (Q&ss)) break; // disallowed difference found - done with current s
    }
  }
  for (j=0;S[lastn][j];j++) {
    printf("%ld,",S[lastn][j]); // lastn now equals MAXn
  }
  puts(""); // print a new line at the end
  return 0;
}
\end{lstlisting}

\bigskip
\hrule
\bigskip

\noindent 2020 {\it Mathematics Subject Classification}:
Primary 05A15;
Secondary 05A19, 05B45, 05A05, 05C20, 11B39.

\noindent \emph{Keywords}:
combinatorial proof, $n$-tiling,
directed pseudograph,
restricted combination.

\bigskip
\hrule
\bigskip

\sloppy
\noindent (Concerned with sequences 
\seqnum{A000045},
\seqnum{A000079},
\seqnum{A000930},
\seqnum{A003269},
\seqnum{A003520},
\seqnum{A005708},
\seqnum{A005709},
\seqnum{A005710},
\seqnum{A006498},
\seqnum{A006500},
\seqnum{A011973},
\seqnum{A031923},
\seqnum{A079972},
\seqnum{A102547},
\seqnum{A121832},
\seqnum{A130137},
\seqnum{A177485},
\seqnum{A224809},
\seqnum{A224810},
\seqnum{A224811},
\seqnum{A224812},
\seqnum{A224813},
\seqnum{A224814},
\seqnum{A224815},
\seqnum{A259278},
\seqnum{A263710},
\seqnum{A276106}, 
\seqnum{A317669},
\seqnum{A322405},
\seqnum{A329146},
\seqnum{A351874},
\seqnum{A368244},
\seqnum{A374737},
\seqnum{A375185}, 
\seqnum{A375186},
\seqnum{A375981},
\seqnum{A375982},
\seqnum{A375983},
\seqnum{A375985}, 
\seqnum{A376033}, and
\seqnum{A385870}.)
\bigskip
\hrule
\bigskip

\vspace*{+.1in}
\noindent
Received August 31 2024; 
revised versions received September 2 2024;
February 14 2025; July 14 2025; July 17 2025.
Published in {\it Journal of Integer Sequences}, July 19 2025.

\bigskip
\hrule
\bigskip

\noindent
Return to \href{https://cs.uwaterloo.ca/journals/JIS/}{Journal of Integer Sequences home page}.
\vskip .1in


\begin{thebibliography}{10}
\providecommand{\url}[1]{\texttt{#1}}
\providecommand{\urlprefix}{URL }

\bibitem{All22}
M.~A. Allen, On a two-parameter family of generalizations of {Pascal}'s
triangle, \emph{J. Integer Sequences} \textbf{25} (2022),
\href{https://cs.uwaterloo.ca/journals/JIS/VOL25/Allen2/allen8.html}{Article 22.9.8}.

\bibitem{All-Comb}
M.~A. Allen, Combinations without specified separations, \emph{Commun.
  Comb. Optim.}  (2024),
\url{https://doi.org/10.22049/cco.2024.29370.1959}.

\bibitem{AE22}
M.~A. Allen and K.~Edwards, On two families of generalizations of {Pascal}'s
  triangle, \emph{J. Integer Sequences} \textbf{25} (2022), 
\href{https://cs.uwaterloo.ca/journals/JIS/VOL25/Allen/allen3.html}{Article 22.7.1}.

\bibitem{AE23}
M.~A. Allen and K.~Edwards, Identities involving the tribonacci numbers squared
  via tiling with combs, \emph{Fibonacci Quart.} \textbf{61} (2023), 21--27.

\bibitem{AE24}
M.~A. Allen and K.~Edwards, Connections between two classes of generalized
  {Fibonacci} numbers squared and permanents of (0,1) {Toeplitz} matrices,
  \emph{Linear Multilinear Algebra} \textbf{72} (2024), 2091--2103.

\bibitem{AE25}
M.~A. Allen and K.~Edwards, Identities relating permanents of some classes of
  (0,1) {Toeplitz} matrices to generalized {Fibonacci} numbers, \emph{Fibonacci
Quart.} \textbf{63} (2025),
\url{https://doi.org/10.1080/00150517.2025.24921984}.
  
\bibitem{Bal12}
V.~Balti\'{c}, Applications of the finite state automata for counting
  restricted permutations and variations, \emph{Yugoslav J. Oper. Res.}
  \textbf{22} (2012), 183--198.

\bibitem{Edw08}
K.~Edwards, A {Pascal}-like triangle related to the tribonacci numbers,
  \emph{Fibonacci Quart.} \textbf{46/47} (2008/2009), 18--25.

\bibitem{EA15}
K.~Edwards and M.~A. Allen, Strongly restricted permutations and tiling with
  fences, \emph{Discrete Appl. Math.} \textbf{187} (2015), 82--90.

\bibitem{EA20a}
K.~Edwards and M.~A. Allen, A new combinatorial interpretation of the
  {Fibonacci} numbers cubed, \emph{Fibonacci Quart.} \textbf{58} (5) (2020),
  128--134.

\bibitem{Kap43}
I.~Kaplansky, Solution of the ``probl{\`e}me des m{\'e}nages'',
  \emph{Bull. Amer. Math. Soc.} \textbf{49} (1943), 784--785.

\bibitem{Kit06}
S.~Kitaev, Independent sets on path-schemes, \emph{J. Integer Sequences} \textbf{9}
  (2006), 
\href{https://cs.uwaterloo.ca/journals/JIS/VOL9/Kitaev/kitaev45.html}{Article 06.2.2}.

\bibitem{Klo09}
T.~Kl{\o}ve, Generating functions for the number of permutations with limited
  displacement, \emph{Electron. J. Combin.} \textbf{16} (2009), R104.

\bibitem{Kon81}
J.~Konvalina, On the number of combinations without unit separation, \emph{J.
  Combin. Theory Ser. A} \textbf{31} (1981), 101--107.

\bibitem{KL91b}
J.~Konvalina and Y.-H. Liu, Bit strings without $q$-separation, \emph{BIT} \textbf{31} (1991), 32--35.

\bibitem{KL91c}
J.~Konvalina and Y.-H. Liu, Subsets without $q$-separation and binomial
  products of {Fibonacci} numbers, \emph{J. Combin. Theory Ser. A} \textbf{57}
  (1991), 306--310.

\bibitem{Leh70}
D.~H. Lehmer, Permutations with strongly restricted displacements, in
  \emph{Combinatorial Theory and its Applications II (Proceedings of the
  Colloquium, Balatonfured, 1969)}, North-Holland, Amsterdam, 1970, pp.
  755--770.

\bibitem{MS08}
T.~Mansour and Y.~Sun, On the number of combinations without certain
  separations, \emph{European J. Combin.} \textbf{29} (2008), 1200--1206.

\bibitem{Mat14_}
R.~J. Mathar, Tilings of rectangular regions by rectangular tiles: Counts
  derived from transfer matrices, arxiv preprint arXiv:1406.7788 [math.CO], 2014. Available at \url{https://arxiv.org/abs/1406.7788}.

\bibitem{Pro83}
H.~Prodinger, On the number of combinations without a fixed distance, \emph{J.
  Combin. Theor. Ser. A} \textbf{35} (1983), 362--365.

\bibitem{Slo-OEIS}
N.~J.~A. Sloane, \emph{The On-Line Encyclopedia of Integer Sequences},
  published electronically, 2025. Available at \url{https://oeis.org}.

\bibitem{Sta=11}
R.~P. Stanley, \emph{Enumerative Combinatorics}, vol.~1, Cambridge University
  Press, Cambridge, 2011.
  
\bibitem{Val11}
A.~A. Valyuzhenich, Some properties of well-based sequences, \emph{J. Appl.
  Ind. Math.} \textbf{5} (2011), 612--614.

\end{thebibliography}
\end{document}